\documentclass[a4paper, 11pt,reqno]{amsart}

\usepackage[T1]{fontenc}      
\usepackage[foot]{amsaddr}    

\usepackage[british]{babel}   
\usepackage{csquotes}         
\usepackage[babel]{microtype} 

\usepackage{mathtools}        
\usepackage{amssymb}          
\usepackage{amsthm}           
\usepackage{amstext}          
\usepackage{mleftright}       
\usepackage{gensymb}          
\usepackage[makeroom]{cancel} 
\usepackage{mathdots}         
\usepackage{mathrsfs}         
\usepackage{dsfont}           

\usepackage{stickstootext}
\usepackage[stickstoo,varbb]{newtxmath} 
\makeatletter
\DeclareFontFamily{U}{ntxmia}{\skewchar \font =127}
 \DeclareFontShape{U}{ntxmia}{m}{it}{
                        <-> \ntxmath@scaled ntxmia
                      }{}    
                      \DeclareFontShape{U}{ntxmia}{b}{it}{
                        <-> \ntxmath@scaled ntxbmia
                      }{}
\makeatother

\usepackage{graphicx}         
\usepackage{psfrag}           
\usepackage{caption}          
\usepackage[labelformat=simple]{subcaption}

\usepackage{tikz}             
\usetikzlibrary{backgrounds,shapes.geometric,positioning,calc}

\usepackage{booktabs}         

\usepackage{enumitem}         

\usepackage[margin=1in]{geometry} 
\usepackage[indent=12pt]{parskip}              
\newlength\docparskip
\usepackage[textsize=footnotesize]{todonotes}

\usepackage[numbers,sort]{natbib}
\makeatletter
\def\NAT@spacechar{~}
\makeatother

\usepackage{hyperref}              
\usepackage[capitalise]{cleveref}  
\hypersetup{colorlinks=true,
   citecolor=blue,
   filecolor=blue,
   linkcolor=blue,
   urlcolor=blue
}
\usepackage{url}

\makeatletter
\if@cref@capitalise
\crefname{figure}{Figure}{Figures}
\crefname{claim}{Claim}{Claims}
\else
\crefname{figure}{Figure}{Figures}
\crefname{claim}{claim}{claims}
\fi
\makeatother
\Crefname{figure}{Figure}{Figures}
\Crefname{claim}{Claim}{Claims}

\Crefname{enumi}{}{}
\usepackage{algorithm}
\usepackage{algpseudocode}

\allowdisplaybreaks

\theoremstyle{definition}
\newtheorem{definition}{Definition}

\theoremstyle{plain}
\newtheorem{claim}{Claim}

\newtheorem{theorem}[definition]{Theorem}
\newtheorem{corollary}[definition]{Corollary}
\newtheorem{lemma}[definition]{Lemma}

\newtheorem{problem}[definition]{Problem}

\newenvironment{claimproof}{%
\let\origqed=\qedsymbol%
\renewcommand{\qedsymbol}{$\blacktriangleleft$}%
\begin{proof}}{\end{proof}\let\qedsymbol=\origqed}

\renewcommand{\binom}[2]{\ensuremath{\mleft(\kern-.1em\genfrac{}{}{0pt}{}{#1}{#2}\kern-.1em\mright)}}    
\newcommand{\inbinom}[2]{\ensuremath{\bigl(\kern-.1em\genfrac{}{}{0pt}{}{#1}{#2}\kern-.1em\bigr)}} 

\newcommand*\nume{\ensuremath{\mathrm{e}}}

\newcommand{\cC}{\mathscr{C}}

\makeatletter
\def\moverlay{\mathpalette\mov@rlay}
\def\mov@rlay#1#2{\leavevmode\vtop{%
  \baselineskip\z@skip \lineskiplimit-\maxdimen
  \ialign{\hfil$\m@th#1##$\hfil\cr#2\crcr}}}
\newcommand{\charfusion}[3][\mathord]{
    #1{\ifx#1\mathop\vphantom{#2}\fi
        \mathpalette\mov@rlay{#2\cr#3}
      }
    \ifx#1\mathop\expandafter\displaylimits\fi}
\makeatother

\newcommand{\eps}{\epsilon}

\newcommand{\COMMENT}[1]{}
\newcommand{\COMNEW}[1]{}
\renewcommand{\COMNEW}[1]{\footnote{\textcolor{red!70!black}{#1}}} 

\title{On $2$-factors of Hamiltonian graphs}

\author[A.~Espuny D\'iaz]{Alberto Espuny D\'iaz}
\email{aespuny@ub.edu}
\address[Espuny D\'iaz]{Departament de Matem\`atiques i Inform\`atica, Universitat de Barcelona (UB), Gran Via de les Corts Catalanes, 585, 08007 Barcelona, Spain.}
\author[A.~Gir\~ao]{Ant\'onio Gir\~ao}
\email{a.girao@ucl.ac.uk}
\address[Gir\~ao]{Department of Mathematics, University College London, London, WC1E\,6BT, United Kingdom.}
\author[B.~Granet]{Bertille Granet}
\email{bertille.granet@warwick.ac.uk}
\address[Granet]{Warwick Mathematics Institute, University of Warwick, Coventry CV4\,7AL, United Kingdom.}
\author[G.~Kronenberg]{Gal Kronenberg}
\email{kronenberg@maths.ox.ac.uk}
\address[Kronenberg]{Mathematical Institute, University of Oxford, Andrew Wiles Building, Radcliffe Observatory Quarter, Woodstock Road, Oxford, OX2\,6GG, United Kingdom.}

\thanks{Alberto Espuny D\'iaz was partially funded by the Deutsche Forschungsgemeinschaft (DFG, German Research Foundation) through project no.\ 513704762.
Bertille Granet was supported by the Australian Research Council grant DP240101048.
Gal Kronenberg is supported by the Royal Commission for the Exhibition of 1851.}

\date{}

\begin{document}

\begin{abstract}
    Let $k\geq 2$.
    We show that, for a sufficiently small $\eps>0$, any sufficiently large $n$-vertex Hamiltonian graph of minimum degree at least $n^{1-\eps}$ contains a $2$-factor consisting of exactly $k$ cycles.
    This is the first minimum-degree condition which is polynomially smaller than linear.
    Our methods yield an analogous result when the host graph is not required to contain a Hamilton cycle, but only a $2$-factor consisting of at most $k$ cycles; this answers a question of Buci\'c, Jahn, Pokrovskiy and Sudakov.
\end{abstract}

\maketitle

\section{Introduction}

A \emph{Hamilton cycle} in a graph $G$ is a cycle that goes through every vertex of~$G$, and a graph is \emph{Hamiltonian} whenever it contains a Hamilton cycle.
The problem of determining whether a graph is Hamiltonian is famously NP-complete~\cite{Karp}, and thus much effort has been devoted to finding simple sufficient conditions for Hamiltonicity.
Perhaps the most famous of these is Dirac's condition:
his classical theorem from 1952~\cite{Dirac52} states that every graph $G$ on $n\geq3$ vertices with minimum degree $\delta(G)\geq n/2$ contains a Hamilton cycle, and this minimum-degree condition is best possible, as can be seen by considering a slightly unbalanced complete bipartite graph.

A \emph{$2$-factor} of a graph $G$ is a spanning $2$-regular subgraph.\COMMENT{The problem of whether a graph contains a $2$-factor or not is in P. In fact, a ``simple'' characterisation was first given by \citet{Belck50} (see the work of \citet{Tutte52} for a much more general result).}
With this language, Dirac's theorem provides an optimal minimum-degree condition for a graph to have a $2$-factor consisting of only one cycle.
It is natural to wonder how the problem changes if, rather than exactly one cycle, we wish to find a $2$-factor consisting of exactly $k$ cycles.
This problem remains NP-complete~\cite[p.~75]{GJ79}, and hence, again, it is natural to search for sufficient conditions for this property.
In 1997, \citet{BCFGL97} extended Dirac's theorem, showing that, for any positive integer~$k$ and provided $n\geq4k-1$ (cf.~\cite{Shuya18}), Dirac's condition in fact ensures the existence of a $2$-factor consisting of exactly $k$ cycles.\COMMENT{In fact, they obtain a stronger Ore-type degree condition.}\COMMENT{\Citet{AF96} had earlier proved a result \cite[Proposition~5.1]{AF96} which implies an approximate version of this result.}\COMMENT{There are multiple other conditions which imply that a graph has a $2$-factor with exactly $k$ cycles; see~\cite{CY18} for a survey.}\COMMENT{In a closely related direction, \citet{ElZ84} considered optimal degree conditions under which one can find not only a $2$-factor with a desired number of cycles, but also prescribe the lengths of these cycles.
He proved this conjecture in the case $k=2$, and subsequently \citet{Abbasi98} and \citet{KSS11} resolved it in general for sufficiently large~$n$. There are several papers considering particular cases of this conjecture.}
(For several other sufficient conditions for this property, see, e.g.,~\cite{AB93,CFGS00,FGJLS04,Shuya18,CJY20,ZY22,CY26}).\COMMENT{Some more references: the results in \cite{CGKOSS07} are superseded by those in \cite{Shuya18}; \cite{Wang99,LWY01} are for bipartite graphs; \cite{GH99,GH06} are about conditions for line graphs.}
This minimum-degree condition is again best possible.
However, all graphs showing that this condition is tight are the same as for Dirac's theorem; in particular, they are not Hamiltonian.
This leads to the question of whether assuming that $G$ is Hamiltonian could significantly reduce the sufficient minimum-degree condition for having a \mbox{$2$-factor} consisting of exactly $k\geq2$ cycles.

In 2005, \citet{FGJLS05} considered this question and conjectured an affirmative answer, which they proved for the particular case that $k=2$, showing that minimum degree $\delta(G)\geq5n/12+2$ suffices for all $n\geq6$.
Soon after, \citet{Sar08} proved the conjecture for all (fixed)~$k$, reducing the lower bound of $n/2$ on the minimum degree by showing that there exists some $\eps>0$ such that, for every $k\geq2$ and $n$ sufficiently large, having minimum degree $\delta(G)\geq(1/2-\eps)n$ suffices for an $n$-vertex Hamiltonian graph~$G$ to contain a $2$-factor consisting of~$k$ cycles.
\citet{DFM14} subsequently improved on the constant in the leading term of the minimum-degree condition, showing that, for every fixed $\eps>0$ and  sufficiently large~$n$, having minimum degree $\delta(G)\geq(2/5+\eps)n$ suffices.
More recently, \citet{BJPS20} substantially improved the previous results by showing that sublinear degrees suffice: for any $k\geq2$ and $\eps>0$, there exists an $n_0\in\mathbb{N}$ such that every Hamiltonian graph on $n\geq n_0$ vertices with $\delta(G)\geq\eps n$ contains a $2$-factor consisting of exactly~$k$ cycles.

Here we make new progress on this problem by providing the first polynomial-factor improvement on the sufficient minimum-degree condition for the existence of a $2$-factor consisting of exactly $k$ cycles in a Hamiltonian graph.
We also generalise previous results in two ways.
First, we allow the number of cycles $k$ to grow polynomially in~$n$.
Second, we do not in fact require $G$ to contain a Hamilton cycle, but only a $2$-factor consisting of at most $k$ cycles.
This second generalisation answers a question of \citet{BJPS20}.

\begin{theorem}\label{thm:main}
    For all $0< \delta \leq 1$, there exist $\eps>0$ and $n_0\in \mathbb{N}$ such that the following holds.
    Let~$G$ be a graph on $n\geq n_0$ vertices with minimum degree $\delta(G)\geq n^{1-\eps}$ and let $k\in \{1,\ldots,\lfloor n^{1-\delta}\rfloor\}$.
    If~$G$ has a $2$-factor consisting of at most~$k$ cycles, then it contains a $2$-factor consisting of exactly~$k$ cycles.
\end{theorem}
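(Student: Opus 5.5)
Starting from a $2$-factor $F$ with $\ell\le k$ cycles, the plan is to increase the number of cycles one at a time until we reach exactly $k$. So it suffices to prove the following. If $G$ has a $2$-factor $F$ with $\ell<k\le n^{1-\delta}$ cycles, then $G$ has a $2$-factor with exactly $\ell+1$ cycles. Since $\ell<n^{1-\delta}$ and there are $n$ vertices, some cycle $C$ of $F$ has length at least $n^{\delta}$. More usefully, most vertices lie on cycles of length at least $n^{\delta}/2$, say.

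The basic splitting operation is the \emph{chord switch}. Write $C=v_1v_2\dots v_m v_1$. If $v_iv_j$ and $v_{i+1}v_{j+1}$ are both edges of $G$, replacing $v_iv_{i+1},v_jv_{j+1}$ by them splits $C$ into two cycles. Each must have length at least $3$, which holds when $|i-j|\ge 3$. The other cycles of $F$ are untouched, so we get a $2$-factor with $\ell+1$ cycles. If this configuration is absent, we try more flexible gadgets: a pair of crossing chords $v_iv_j$, $v_{i+1}v_{j+1}$ whose endpoints lie on two different cycles merges them, and a triple of chords can re-route a cycle. Composing a bounded number of such switches (merge two cycles, then split the result twice) still nets $+1$ cycles. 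So the goal is to show that some configuration of $O(1)$ chords yielding a net increase of one must exist.

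To find such a configuration I would use the minimum degree. Fix the long cycles and orient each along $F$. For each vertex $v$, let $N^+(v)$ be the set of successors of its neighbours. A $2$-chord split exists unless, for every $v_i$, the vertex $v_{i+1}$ avoids $N^+(v_i)$ within $C$, roughly. So in the bad case the graph has strong structure. Each $v$ has $\ge n^{1-\eps}$ neighbours, and the sets $N^+(v)$ are forbidden for $v^+$. Iterating this over $O(1/\eps)$ steps, in the spirit of a dependent random choice or Pósa-rotation style expansion, we either find a splitting configuration of bounded size or conclude that some set has size more than $n$. Concretely, I would run a counting argument. Consider a chain of neighbourhoods of length $t\approx 1/\eps$, and count pairs $(x,y)$ with $x^+y^+$ or more general ``$t$-step switch'' patterns. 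The density $n^{-\eps}$ raised to power $O(1/\eps)$ is still $n^{-O(1)}$, which beats the $n^{O(1)}$ count of configurations. This is where $\eps$ must be small relative to $\delta$: we need each chosen chord to hit a long cycle at distance at least $3$ from the forbidden positions. Also, a constant fraction of each neighbourhood must lie on cycles long enough (length $\ge n^{\delta}$) that the parity/ordering constraints do not kill the configuration.

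The main obstacle is this structural step. We must show that a graph with $\delta(G)\ge n^{1-\eps}$ carrying a $2$-factor with few cycles admits a bounded-size switch producing exactly one more cycle, rather than one that changes the count by the wrong amount or creates a cycle of length at most $2$. I would first try a regularity-free ``sparse absorption'' style argument. Split each long cycle into segments of length $n^{\delta/2}$ and view chords between segments as a reduced auxiliary graph with average degree about $n^{1-\eps}$. Then find in it a short closed pattern (a $K_{2,2}$ or short cycle of chords with consistent orientations) via Kővári–Sós–Turán-type counting, which works since the density exceeds $n^{-\eps}$ for small $\eps$. The orientation consistency is what makes the resulting switch give a net increase of exactly one cycle.
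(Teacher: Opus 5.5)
The step you flag as the main obstacle is exactly where the proof is missing, and it is not a technicality.

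First, your basic gadget is wrong. Deleting $v_iv_{i+1},v_jv_{j+1}$ and adding $v_iv_j,v_{i+1}v_{j+1}$ gives the single cycle $v_iv_jv_{j-1}\dots v_{i+1}v_{j+1}\dots v_i$; a split needs $v_iv_{j+1},v_{i+1}v_j$ instead. This crossing versus non-crossing distinction is precisely what governs how many cycles a switch produces, and the phrase ``orientation consistency'' does not control it.

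Second, nothing shows that the \emph{given} $2$-factor $F$ contains an $O(1)$-chord switch netting $+1$. The iteration over $O(1/\eps)$ steps and the K\H{o}v\'ari--S\'os--Tur\'an count on a segment-reduced graph only see edge densities. The configurations you need are tight, since the deleted edges must be consecutive pairs on $F$, and neighbourhoods of consecutive vertices can be correlated adversarially. For instance, with $4\mid m$, take vertices $a_s,b_s$ ($s\in\mathbb{Z}_m$), the Hamilton cycle $a_1b_1a_2b_2\dots a_mb_ma_1$, and chords $a_sa_t$ for $s\equiv t$ and $b_sb_t$ for $s-t\equiv 2 \pmod 4$. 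Then $\delta(G)\geq n/8$, and any two disjoint segments of length $L$ span $\Theta(L^2)$ chords, so your reduced graph is complete. Yet an implanted $C_4$ would have to join $a_s,a_t$ with $s\equiv t$ and their cycle-neighbours $b_{s+\delta_1},b_{t+\delta_2}$ with $\delta_1,\delta_2\in\{0,-1\}$ and $\delta_1-\delta_2\equiv 2\pmod 4$, which is impossible. So this Hamilton cycle has no $2$-chord switch of either kind. Whether longer alternating configurations add exactly one cycle depends on their cyclic arrangement, and your counting says nothing about that.

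The missing idea is to change the $2$-factor rather than search inside it. The paper proceeds as follows.
\begin{enumerate}
\item It merges the $\ell$ cycles into a Hamilton cycle of $G\cup E_+$ with a small protected edge set.
\item It partitions $V(G)$ by dependent random choice into at most $n^{\zeta}$ parts in which any two vertices have more than $n^{1-\zeta}$ common neighbours (\cref{lm:part}).
\item It uses Thomassen's lemma, via \cref{lem:thomassen_new}, to reroute the Hamilton cycle while keeping the protected edges and gaining one desirable edge. An extremal choice (maximising good sets, then good edges, built from \cref{lm:covervtxs,lm:closeC4}) yields a Hamilton cycle with $n^{2-\eta}$ implanted $C_4$'s (\cref{cor:stage}).
\item Only with this abundance do the results on three pairwise crossing or pairwise disconnected edges (\cref{lm:crossing,lm:noncrossing}) help. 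Applied to $H_{\cC}$ drawn on a circle or on two parallel lines, they provide at most three implanted $C_4$'s whose joint switch adds exactly one cycle. This is iterated in \cref{lm:switch}, and reverting the $E_+$ edges costs only $O(\ell n)$ implanted $C_4$'s.
\end{enumerate}
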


\Citet{BJPS20} noted that, in fact, a careful analysis of their proof yields an (explicit) expression of the form $\delta(G)\geq n/{\operatorname{poly}\log\log n}$ as a sufficient minimum-degree condition for having a $2$-factor consisting of exactly~$k$ cycles (for any fixed $k$ and sufficiently large~$n$).
However, they point out that their methods do not seem sufficient for improving the minimum-degree condition by a polynomial factor, like we achieve in \cref{thm:main}.
Indeed, our approach for obtaining this improvement is very different from that in their paper.

\section{Notation}

Given any $x\in\mathbb{R}$, we will denote $[x]\coloneqq\{k\in\mathbb{Z}:1\leq k\leq x\}$.
To simplify the presentation, we will often write \emph{hierarchies} to express the relationships between parameters in our statements.
Formally, if we write that a statement holds for $0<a\ll b\leq 1$, we mean that for all $b\in(0,1]$ there exists some~$a_0>0$ such that, for all $a\in(0,a_0]$, the statement holds for this choice of~$a$ and~$b$.
This notation extends in the natural way to hierarchies which are longer or where one parameter depends on several others, with parameters being defined from right to left.
When one of the parameters in a hierarchy is written as~$1/k$, the reader should interpret that~$k\in\mathbb{N}$.
For the sake of streamlining the presentation, we usually ignore rounding when this does not affect the argument.

Some of our statements will be asymptotic in nature.
When working with asymptotic statements in probabilistic settings, we will say that an event occurs \emph{asymptotically almost surely} (a.a.s.) if the probability that it holds tends to $1$ as $n$ tends to infinity.

Our graph-theoretic notation is standard.
Let $G$ be a graph with vertex set~$V(G)$ and edge set~$E(G)$, and let $E\subseteq E(G)$ be a set of edges and $S,T\subseteq V(G)$ be disjoint sets of vertices.
We write \mbox{$V(E)\coloneqq\bigcup_{e\in E}e$}.
We denote by $G\setminus E$ the graph on vertex set $V(G)$ with edge set $E(G)\setminus E$.
The subgraph of~$G$ \emph{induced} by~$S$, that is, the graph with vertex set $S$ and edge set $\{e\in E(G):e\subseteq S\}$, is denoted by~$G[S]$.
We set $G-S\coloneqq G[V(G)\setminus S]$.
We write $e(G)\coloneqq|E(G)|$.
The set of all edges $e\in E(G)$ which join a vertex in~$S$ to one in~$T$ is denoted by $E_G(S,T)$, and we set $e_G(S,T)\coloneqq|E_G(S,T)|$.
If $S=\{v\}$, we simplify the notation to $e_G(v,T)$ (and similarly for the rest of the notation); we also write a simple~$uv$ to denote an edge $\{u,v\}\in E(G)$.
The \emph{neighbourhood} in $G$ of a vertex $v\in V(G)$ is the set $N_G(v)\coloneqq\{u\in V(G):uv\in E(G)\}$.
We denote $N_G(S)\coloneqq\bigcup_{v\in S}N_G(v)$.
The \emph{degree} of~$v$ in~$G$ is $d_G(v)\coloneqq|N_G(v)|$, and the minimum of the degrees in~$G$ of all vertices of~$G$ is denoted by~$\delta(G)$.

We denote a cycle of length $4$ as $C_4$.
We say two non-incident edges $xy,zw$ \emph{form} a $C_4$ in a graph $G$ if $xy,yz,zw,wx\in E(G)$ or $xy,yw,wz,zx\in E(G)$. 
Given a graph $G$ which contains some (longer) cycle $\cC$, we say that a copy $C$ of $C_4$ in $G$ is \emph{implanted} in $\cC$ if two non-incident edges of~$C$ are in $\cC$ and the other two edges are not (but they are still in the graph $G$).
Given a cycle $\cC$ with an implanted cycle~$C$ of length $4$, a \emph{$C_4$-switch} is the operation of deleting the two edges of $C$ which are in $\cC$ and adding the two which are not in $\cC$ (but are in $G$), that is, the result of the $C_4$-switch is $\cC\triangle C$. 

\section{Proof overview}\label{sec:sketch}

A general strategy for constructing a $2$-factor consisting of exactly $k$ cycles is to begin with an arbitrary $2$-factor, typically one with fewer than $k$ components, and then refine it by partitioning some longer cycles into shorter ones.

A natural mechanism for splitting a long cycle is to identify a suitable collection of chords and use them to induce a partition.
One instance of such a configuration is given by the set of chords which are edges of a cycle of even length $\ell$ which alternates between edges of the long cycle and chords.
Depending on how these chords lie with respect to each other, the resulting $2$-factor will have a different number of components (in particular, it might even be another Hamilton cycle).
In order to control the number of cycles, the method introduced by \citet{BJPS20} relies on imposing some specific pattern on the chords and employing blow-ups of these structures to allow for additional fine-tuning.
However, this technique entails an inherent limitation with respect to lowering the minimum degree condition, namely the existence of a suitably large blow-up.
Our approach departs from this framework by exploiting a family of structures arising from the presence of many copies of~$C_4$ in the host graph.

The basic idea is to show that, if a long cycle ``contains'' sufficiently many appropriate copies of~$C_4$, then it can be partitioned into multiple shorter cycles (see \cref{sect:C4switch}).
Towards our goal of partitioning a long cycle into several cycles, it is essential to analyse how distinct copies of $C_4$ interact.
We therefore employ results concerning graph drawings to ensure the existence of sufficiently many copies of $C_4$ within each long cycle that interact in a structurally favourable manner. 
These results are stated in \cref{sec:drawings} and applied in \cref{sec:switches}. 

However, although the prescribed minimum degree condition ensures an abundance of~$C_4$'s in the graph as a whole, these copies may not lie along the given long cycle in the required configuration.
Consequently, it will be crucial for us to transform the original long cycle into another long cycle that exhibits the desired structural property.
Our main novel idea is a two-step process for ``forcing'' many $C_4$'s into a Hamilton cycle (see \cref{sect:sketch2}).
To this end, we invoke a structural result of Thomassen (see \cref{lem:thomassen}), which permits controlled modifications of the edge set of a long cycle while preserving a specified collection of ``protected'' edges.

We now explain the key proof ingredients in more detail.
Let $0< 1/n\ll \eps \ll \eta\ll \delta \leq 1$.
For simplicity, we assume here that $\ell=1$.
Let $G$ be an \mbox{$n$-vertex} graph with minimum degree at least $n^{1-\eps}$ and containing a Hamilton cycle~$\cC$.
We want to transform~$\cC$ into a $2$-factor consisting of a desired number $k$ of cycles.

\subsection{\texorpdfstring{$C_4$}{C4}-switches}\label{sect:C4switch}

As seen in \cref{fig:non}, if $C=x_1x_2x_3x_4$ is a copy of~$C_4$ which is implanted in~$\cC$ and whose edges in $G\setminus \cC$ are ``non-crossing'', then $\cC \triangle C$ is a $2$-factor which consists of precisely two cycles.
Here the fact that the edges in $G\setminus \cC$ are ``non-crossing'' is crucial.
Indeed, as seen in \cref{fig:cross}, if they are ``crossing'', then $\cC\triangle C$ is another Hamilton cycle.
However, if we can find two ``crossing'' $C_4$'s $C=x_1x_2x_3x_4$ and $C'=y_1y_2y_3y_4$ which are ``crossing'' each other as in \cref{fig:two}, then $\cC\triangle (C \cup C')$ is again a $2$-factor consisting of precisely two cycles.

\begin{figure}[h]
    \centering

\begin{subfigure}{0.3\textwidth}
\centering            
\begin{tikzpicture}[scale=1.1]
\draw[line join=round,line cap=round,line width=.2cm, red!95] (1.5,0) arc(0:-82.5:1.5cm and 1cm);
\draw[line join=round,line cap=round,line width=.2cm, red!95] (1.5,0) arc(0:82.5:1.5cm and 1cm);
\draw[line join=round,line cap=round,line width=.2cm, blue!15] (-1.5,0) arc(180:97.5:1.5cm and 1cm);
\draw[line join=round,line cap=round,line width=.2cm, blue!15] (-1.5,0) arc(180:262.5:1.5cm and 1cm);
\draw (1.5,0) arc [start angle=0,end angle=360,x radius=1.5,y radius=1]
        node [pos=.73,draw,ellipse, scale=.4, fill] (x1) {}
        node [pos=.77,draw,ellipse, scale=.4, fill] (x2) {}
        node [pos=.23,draw,ellipse, scale=.4, fill] (x3) {}
        node [pos=.27,draw,ellipse, scale=.4, fill] (x4) {};
\draw[line join=round,line cap=round,line width=.2cm, blue!15]  (x1) to  (x4);
\draw[line join=round,line cap=round,line width=.2cm, red!95]  (x3) to  (x2);

\draw (1.5,0) arc [start angle=0,end angle=360,x radius=1.5,y radius=1]
        node [pos=.73,draw,ellipse, scale=.4, fill] (x1) {}
        node [pos=.77,draw,ellipse, scale=.4, fill] (x2) {}
        node [pos=.23,draw,ellipse, scale=.4, fill] (x3) {}
        node [pos=.27,draw,ellipse, scale=.4, fill] (x4) {};
\draw[dashed, thick]  (x1) to  (x4);
\draw[dashed, thick]  (x3) to  (x2);
\filldraw[black] (-.25,1) circle (0pt) node [anchor=south]{$x_1$};
\filldraw[black] (.25,1) circle (0pt) node[anchor=south]{$x_2$};
\filldraw[black] (-.25,-1) circle (0pt) node[anchor=north]{$x_4$};
\filldraw[black] (.25,-1) circle (0pt) node[anchor=north]{$x_3$};
\end{tikzpicture}
\caption{Implanted $C_4$ with ``non-crossing'' edges.}
\label{fig:non}
\end{subfigure}
\hfill
\begin{subfigure}{0.3\textwidth}
\centering
\begin{tikzpicture}[scale=1.1]
\draw[line join=round,line cap=round,line width=.2cm, blue!15] (1.5,0) arc(0:-82.5:1.5cm and 1cm);
\draw[line join=round,line cap=round,line width=.2cm, blue!15] (1.5,0) arc(0:82.5:1.5cm and 1cm);
\draw[line join=round,line cap=round,line width=.2cm, blue!15] (-1.5,0) arc(180:97.5:1.5cm and 1cm);
\draw[line join=round,line cap=round,line width=.2cm, blue!15] (-1.5,0) arc(180:262.5:1.5cm and 1cm);
\draw (1.5,0) arc [start angle=0,end angle=360,x radius=1.5,y radius=1]
        node [pos=.73,draw,ellipse, scale=.4, fill] (x1) {}
        node [pos=.77,draw,ellipse, scale=.4, fill] (x2) {}
        node [pos=.23,draw,ellipse, scale=.4, fill] (x3) {}
        node [pos=.27,draw,ellipse, scale=.4, fill] (x4) {};
\draw[line join=round,line cap=round,line width=.2cm, blue!15]  (x1) to  (x3);
\draw[line join=round,line cap=round,line width=.2cm, blue!15]  (x4) to  (x2);

\draw (1.5,0) arc [start angle=0,end angle=360,x radius=1.5,y radius=1]
        node [pos=.73,draw,ellipse, scale=.4, fill] (x1) {}
        node [pos=.77,draw,ellipse, scale=.4, fill] (x2) {}
        node [pos=.23,draw,ellipse, scale=.4, fill] (x3) {}
        node [pos=.27,draw,ellipse, scale=.4, fill] (x4) {};
\draw[dashed, thick]  (x1) to  (x3);
\draw[dashed, thick]  (x4) to  (x2);
\filldraw[black] (-.25,1) circle (0pt) node [anchor=south]{$x_1$};
\filldraw[black] (.25,1) circle (0pt) node[anchor=south]{$x_2$};
\filldraw[black] (-.25,-1) circle (0pt) node[anchor=north]{$x_3$};
\filldraw[black] (.25,-1) circle (0pt) node[anchor=north]{$x_4$};
\end{tikzpicture}
\caption{Implanted $C_4$ with ``crossing'' edges.}
\label{fig:cross}
\end{subfigure}
\hfill
\begin{subfigure}{0.3\textwidth}
\centering
\begin{tikzpicture}[scale=1.1]
\draw[line join=round,line cap=round,line width=.2cm, red!95] (.19,.99) arc(82.5:11:1.5cm and 1cm);
\draw[line join=round,line cap=round,line width=.2cm, blue!15] (.19,-.99) arc(-82.5:-11:1.5cm and 1cm);
\draw[line join=round,line cap=round,line width=.2cm, blue!15] (-.19,.99) arc(97.5:169:1.5cm and 1cm);
\draw[line join=round,line cap=round,line width=.2cm, red!95] (-.19,-.99) arc(262.5:191:1.5cm and 1cm);
\draw (1.5,0) arc [start angle=0,end angle=360,x radius=1.5,y radius=1]
        node [pos=-.03,draw,ellipse, scale=.4, fill] (y1) {}
        node [pos=.03,draw,ellipse, scale=.4, fill] (y2) {}
        node [pos=.47,draw,ellipse, scale=.4, fill] (y3) {}
        node [pos=.53,draw,ellipse, scale=.4, fill] (y4) {}
        node [pos=.73,draw,ellipse, scale=.4, fill] (x1) {}
        node [pos=.77,draw,ellipse, scale=.4, fill] (x2) {}
        node [pos=.23,draw,ellipse, scale=.4, fill] (x3) {}
        node [pos=.27,draw,ellipse, scale=.4, fill] (x4) {};
\draw[line join=round,line cap=round,line width=.2cm, red!95]  (x1) to  (x3);
\draw[line join=round,line cap=round,line width=.2cm, red!95]  (y4) to  (y2);
\draw[line join=round,line cap=round,line width=.2cm, blue!15]  (x4) to  (x2);
\draw[line join=round,line cap=round,line width=.2cm, blue!15]  (y1) to  (y3);
\draw (1.5,0) arc [start angle=0,end angle=360,x radius=1.5,y radius=1]
        node [pos=-.03,draw,ellipse, scale=.4, fill] (y1) {}
        node [pos=.03,draw,ellipse, scale=.4, fill] (y2) {}
        node [pos=.47,draw,ellipse, scale=.4, fill] (y3) {}
        node [pos=.53,draw,ellipse, scale=.4, fill] (y4) {}
        node [pos=.73,draw,ellipse, scale=.4, fill] (x1) {}
        node [pos=.77,draw,ellipse, scale=.4, fill] (x2) {}
        node [pos=.23,draw,ellipse, scale=.4, fill] (x3) {}
        node [pos=.27,draw,ellipse, scale=.4, fill] (x4) {};
\draw[dashed, thick]  (x1) to  (x3);
\draw[dashed, thick]  (x4) to  (x2);
\draw[dashed, thick]  (y1) to  (y3);
\draw[dashed, thick]  (y4) to  (y2);
\filldraw[black] (-.25,1) circle (0pt) node [anchor=south]{$x_1$};
\filldraw[black] (.25,1) circle (0pt) node[anchor=south]{$x_2$};
\filldraw[black] (-.25,-1) circle (0pt) node[anchor=north]{$x_3$};
\filldraw[black] (.25,-1) circle (0pt) node[anchor=north]{$x_4$};
\filldraw[black] (1.5,.25) circle (0pt) node [anchor=west]{$y_1$};
\filldraw[black] (1.5,-.25) circle (0pt) node[anchor=west]{$y_2$};
\filldraw[black] (-1.5,-.25) circle (0pt) node[anchor=east]{$y_4$};
\filldraw[black] (-1.5,.25) circle (0pt) node[anchor=east]{$y_3$};
\end{tikzpicture}
\caption{Two ``crossing'' implanted $C_4$'s with ``crossing'' edges.}
\label{fig:two}
\end{subfigure}

    \caption{A Hamilton cycle (full black) with one or two implanted $C_4$'s (dashed black). Performing $C_4$ switches gives rise to a new $2$-factor (highlighted) consisting of one or two cycles depending on the configuration.}
    \label{fig:sketch}
\end{figure}

More generally, we will see in \cref{lm:switch} that, in order to transform a Hamilton cycle into a $2$-factor consisting of exactly $k$ cycles, it suffices to find a suitable combination of at most $3k$~$C_4$'s which are implanted in $\cC$.
Using results about drawings of graphs on the plane, we will show that we can find such a suitable combination as long as the total number of $C_4$'s implanted in~$\cC$ is large (say, at least $n^{2-10\eta}$).
It thus suffices to prove that there is a Hamilton cycle with that many implanted~$C_4$'s.

\subsection{Finding a Hamilton cycle with many implanted \texorpdfstring{$C_4$}{C4}'s}\label{sect:sketch2}

Our key tool will be a result which follows from the work of \citet{Tho97} (see \cref{lem:thomassen_new} for a precise statement).
Suppose~$\cC_0$ is a Hamilton cycle of $G$ and that $E$ is a small set of good edges in $\cC_0$ which we want to preserve.
Suppose, moreover, that there is a graph $G'\subseteq G$ which consists of desirable edges which we would like to incorporate into~$\cC_0$.
If all but a small set $B$ of bad vertices have large degree in~$G'$, then \cref{lem:thomassen_new} guarantees that $G$ has a Hamilton cycle $\cC_0'$ which contains all the edges in $E$ as well as an edge from~$G'$.

Suppose for simplicity that any two vertices have many common neighbours, which implies that any two vertices are contained in many $C_4$'s.
More precisely, we have the following property: if $uv\in E(G)$, then for any neighbour $w\neq u$ of $v$ there are many edges $wx$ which form a~$C_4$ with~$uv$.
We say a set $E$ of edges is \emph{good} if it is small and almost all vertices $v\in V(G)$ are incident to many edges $uv$ which form a $C_4$ with an edge in $E$.

First, we prove that $G$ has a Hamilton cycle with at least $n^{1-3\eta}$ disjoint good sets.
Suppose not, and let $\cC_1$ be a Hamilton cycle with the maximum number of disjoint good sets $E_1,\dots, E_t$ and which contains one more small set $E_{t+1}$ which is as close to being good as possible.
We want to use \cref{lem:thomassen_new} to contradict the maximality of $\cC_1$.
The union $E_1\cup \dots \cup E_t\cup E_{t+1}$ will be the small set $E$ that we preserve.
We let $G'$ consist of all the edges $e$ of $G\setminus E(\cC_1)$ such that $E_{t+1}\cup \{e\}$ is significantly closer to being good than $E_{t+1}$.
Using that any two vertices have many common neighbours, one can show that~$\delta(G')$ is large.
Thus, \cref{lem:thomassen_new} implies that $G$ has a Hamilton cycle containing $E_1\cup\dots\cup E_t\cup E_{t+1}$ as well as one of these edges $e$, a contradiction.

We will now only consider Hamilton cycles which contain $t\coloneqq n^{1-3\eta}$ fixed good sets $E_1, \dots, E_t$.
We say an edge $e$ is compatible with $E_i$ if $e$ forms a $C_4$ with one of the edges in~$E_i$.
An edge is \emph{good} if it is compatible with $n^{1-4\eta}$ good sets $E_i$.
We let $\cC_2$ be a Hamilton cycle containing $E_1, \dots, E_t$ as well as a maximum number of good edges.
If there are at least $n^{1-6\eta}$ good edges, then the number of implanted $C_4$'s is at least $n^{2-10\eta}$, and we are done.
So suppose not.
Once again, we want to use \cref{lem:thomassen_new} to find a contradiction, this time with the fact that $\cC_2$ contains the maximum number of good edges.
We want to preserve all the edges in $E_1\cup \dots \cup E_t$ as well as all good edges in $\cC_2$.
Now we let $G'$ be the set of all good edges in $G\setminus E(\cC_2)$.
Using that any two vertices have many common neighbours, one can show that all but a small set $B$ of bad vertices have large degree in $G'$.
Hence, by \cref{lem:thomassen_new}, $G$ has a Hamilton cycle $\cC_2'$ containing $E_1,\dots, E_t$ plus at least one more good edge, a contradiction.

In general, however, it is not true that any two vertices in $G$ have many common neighbours.
Nevertheless, we can partition~$V(G)$ into few parts such that this property holds for any two vertices in a common part (see \cref{lm:part}).
So instead we essentially apply the above argument to each part in parallel.
In practice, this means we will have to deal with both arguments which make use of \cref{lem:thomassen_new} at once (see \cref{cor:stage}).

\section{Preliminaries}

\subsection{Chernoff's bound}

We will use the following standard Chernoff bound (see, e.g., the book of \citet[Corollary~2.3 and Theorem~2.10]{JLR}).

\begin{lemma}\label{lem:Chernoff}
Let\/ $X$ be a sum of independent Bernoulli random variables, or a hypergeometric random variable.
Then, for all\/ $0<\delta<1$ we have that\/
$\mathbb{P}[|X-\mathbb{E}[X]|\geq\delta \mathbb{E}[X]]\leq2\nume^{-\delta^2\mathbb{E}[X]/3}$.
\end{lemma}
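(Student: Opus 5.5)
This statement is the standard Chernoff--Hoeffding concentration inequality, and the paper does not prove it; it cites \citet[Corollary~2.3 and Theorem~2.10]{JLR}. If I were to prove it, I would use the exponential moment method. First I treat $X=\sum_i X_i$ with independent $X_i\sim\mathrm{Bernoulli}(p_i)$, and set $\mu\coloneqq\mathbb{E}[X]$. For $\lambda>0$, Markov's inequality applied to $\nume^{\lambda X}$ gives
\[
\mathbb{P}[X\geq(1+\delta)\mu]\leq \nume^{-\lambda(1+\delta)\mu}\prod_i\bigl(1+p_i(\nume^\lambda-1)\bigr)\leq \exp\bigl(\mu(\nume^\lambda-1)-\lambda(1+\delta)\mu\bigr).
\]
Here I used independence and $1+x\leq\nume^x$. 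Optimising at $\lambda=\log(1+\delta)$ yields the bound $\bigl(\nume^{\delta}/(1+\delta)^{1+\delta}\bigr)^{\mu}$. The lower tail is handled the same way with $\lambda<0$, giving $\bigl(\nume^{-\delta}/(1-\delta)^{1-\delta}\bigr)^{\mu}$.

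The analytic step is to check two elementary inequalities for $0<\delta<1$:
\[
(1+\delta)\log(1+\delta)-\delta\geq\delta^2/3 \qquad\text{and}\qquad (1-\delta)\log(1-\delta)+\delta\geq\delta^2/2\geq\delta^2/3.
\]
Each follows by writing $f(\delta)$ for the difference, noting $f(0)=f'(0)=0$, and checking $f''\geq0$ on $[0,1)$. For the first, $f''(\delta)=1/(1+\delta)-2/3\geq0$ on $[0,1/2]$; on $[1/2,1)$ I would use a direct check, or alternatively the sharper standard bound $\delta^2/(2+2\delta/3)\geq\delta^2/3$ for $\delta\leq 3/2$. A union bound over the two tails then gives the factor~$2$.

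For the hypergeometric case, I would invoke Hoeffding's observation that if $X$ is hypergeometric and $Y$ is binomial with the same mean, then $\mathbb{E}[\phi(X)]\leq\mathbb{E}[\phi(Y)]$ for every convex~$\phi$; this comes from viewing sampling without replacement as a conditional expectation of sampling with replacement. Taking $\phi(x)=\nume^{\lambda x}$ shows that the moment generating function bound above carries over verbatim, so the same tail estimates hold.

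The only mildly delicate point is the elementary inequality for the upper tail with constant $1/3$ across the whole range $\delta\in(0,1)$. The comparison for the hypergeometric distribution is the one genuinely nontrivial ingredient, and I would simply cite it as in \citet{JLR}.
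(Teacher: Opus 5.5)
Your proposal is correct. It is the standard argument behind the results of \citet{JLR} that the paper cites in place of a proof: the exponential-moment bound for independent Bernoulli sums, followed by Hoeffding's convex-order comparison with the binomial for the hypergeometric case. The one step you left as a ``direct check'' closes as follows: $f'(\delta)=\log(1+\delta)-2\delta/3$ is concave on $[0,1]$ with $f'(0)=0$ and $f'(1)=\log 2-2/3>0$. Hence $f'\geq 0$ there, so the constant $1/3$ holds for all $\delta\in(0,1)$.
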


\subsection{Thomassen's lemma}\label{sec:Thomassen}

The following classical result of \citet{Tho97} (which uses Thomason's lollipop method~\cite{Tho78}) provides conditions which guarantee the existence of at least two different Hamilton cycles in a Hamiltonian graph.

Given a graph $G$ and a vertex set $S\subseteq V(G)$, we say that $S$ is \emph{$G$-independent} if $G[S]$ is empty.
Given a set $T\subseteq V(G)\setminus S$, we say that $S$ \emph{dominates} $T$ in $G$ if for every $x\in T$ we have that $e_G(x,S)>0$.

\begin{lemma}[{\citet[Theorem~2.2]{Tho97}}]\label{lem:thomassen}
    Let $G$ be a graph with a Hamilton cycle $\cC$.
    Let $S\subseteq V(G)$ be a $\cC$-independent set which dominates $N_{\cC}(S)$ in $G\setminus \cC$.
    Then, $G$ contains a Hamilton cycle~$\cC'$ different from~$\cC$ and such that $\cC-S=\cC'-S$.
\end{lemma}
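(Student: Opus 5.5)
The plan is to adapt Thomason's lollipop argument~\cite{Tho78}: build an auxiliary graph whose vertices are Hamilton paths containing every edge of $\cC-S$, and use a parity (handshake) argument.

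\textbf{Structure of $\cC-S$.} Since $S$ is $\cC$-independent, every $s\in S$ has its two $\cC$-neighbours outside~$S$. Hence $\cC-S$ is a disjoint union of $|S|$ paths, and every endpoint of these paths lies in $N_{\cC}(S)$. A Hamilton cycle $\cC'$ satisfies $\cC'-S=\cC-S$ exactly when it contains all edges of $\cC-S$ and every other edge of $\cC'$ joins a vertex of $S$ to an endpoint of one of these paths. So we only ever modify edges between $S$ and $N_{\cC}(S)$.

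\textbf{Auxiliary graph.} Fix $s_0\in S$ with $\cC$-neighbours $x,y$. Let $\mathcal{P}$ be the set of Hamilton paths $P=v_1v_2\dots v_n$ of $G$ with $v_1=s_0$, $v_2=y$, containing every edge of $\cC-S$, and with all other edges between $S$ and $N_{\cC}(S)$. For such~$P$, the endpoint $v_n$ is not in~$S$, because the vertex after any $s\in S$ on $P$ is the start of a $\cC-S$ path, which $P$ then traverses. So $v_n$ is the end of a segment of $\cC-S$, and $v_n\in N_{\cC}(S)$.

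Given $P\in\mathcal{P}$ and $s=v_i\in S\setminus\{s_0\}$ with $v_ns\in E(G)\setminus E(P)$, define the rotation $P'=v_1\dots v_i v_n v_{n-1}\dots v_{i+1}$. Then $v_{i+1}$ is an endpoint of a $\cC-S$ segment, so $P'\in\mathcal{P}$ and its new endpoint $v_{i+1}$ is again in $N_{\cC}(S)$. Rotations are reversible, so this defines a graph $\Gamma$ on~$\mathcal{P}$. The degree of $P$ in $\Gamma$ is the number of neighbours of $v_n$ in $S\setminus\{s_0\}$ other than~$v_{n-1}$. The path $P$ closes to a Hamilton cycle exactly when $v_ns_0\in E(G)$.

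Take $P_0\coloneqq\cC-s_0x$, which ends at $x$ and closes to~$\cC$. A Hamilton cycle through $s_0y$ with $\cC'-S=\cC-S$ that differs from $\cC$ corresponds to a closable path in $\mathcal{P}$ other than~$P_0$. The aim is to show that the component of $\Gamma$ containing $P_0$ contains another closable path. By the handshake lemma it suffices to show:
\begin{itemize}[label={}]
\item \emph{(parity)} a path $P\in\mathcal{P}$ has odd $\Gamma$-degree if and only if it is closable.
\end{itemize}

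\textbf{Main obstacle.} The degree of $P$ is $|N_G(v_n)\cap S|-1-[v_ns_0\in E(G)]$, which need not have the right parity in general. The domination hypothesis guarantees only that $v_n$ has at least one $S$-neighbour besides its $\cC$-neighbour $v_{n-1}$, so the degree is positive. This rules out dead ends but does not control parity.

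To fix this, I would first pass to a spanning subgraph $G_0\subseteq G$ containing $\cC$. In $G_0$, keep for each $u\in N_{\cC}(S)$ exactly one edge to $S$ outside~$\cC$, which exists by domination. Then every $u\in N_{\cC}(S)$ has exactly two $G_0$-neighbours in~$S$: its $\cC$-neighbour and the chosen chord. Now every path in $\mathcal{P}$ (defined with respect to $G_0$) has degree $1-[v_ns_0\in E(G_0)]$, so $\Gamma$ has maximum degree~$1$ and the parity argument becomes simple path-following.

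\textbf{Expected gap and the fallback.} The hardest step is checking that, with this choice of $G_0$, the rotation process from $P_0$ never returns to $P_0$ and terminates at a closable path $P\neq P_0$. A second point is that the resulting cycle must differ from~$\cC$. Its closure contains the chord chosen at $y$ or at the other endpoint edges, so it should differ, but this needs confirming. If the degree-one bookkeeping fails, I would fall back on the full parity count in $G$, pairing neighbours of $v_n$ in $S$ as in Thomassen's proof of~\cite[Theorem~2.2]{Tho97}, and would treat that bookkeeping as the core of the argument.
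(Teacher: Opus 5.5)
Your overall plan is the right one: a lollipop argument on Hamilton paths that start with $s_0y$ and contain all of $\cC-S$, after thinning $G$ to a spanning $G_0\supseteq\cC$ in which every vertex of $N_{\cC}(S)$ keeps exactly one chord to $S$. This is the method underlying Thomassen's theorem. The paper itself does not reprove the lemma; it cites it and only remarks that edges from $S$ to $V(G)\setminus(S\cup N_{\cC}(S))$ can never be used.

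However, the step everything rests on, the degree count in $\Gamma$, is wrong as written, and with your numbers the argument fails. You assert that every $u\in N_{\cC}(S)$ has exactly two $G_0$-neighbours in $S$, and conclude that $\deg_\Gamma(P)=1-[v_ns_0\in E(G_0)]$. Then $P_0$, which is closable because $xs_0\in E(\cC)$, has degree $0$. It is isolated in $\Gamma$, so there is nothing to follow. Your formula even says a path has odd degree if and only if it is \emph{not} closable, the opposite of the parity you announced.

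The error, already present in your formula for $G$, is subtracting $1$ for $v_{n-1}$ unconditionally. In fact $v_{n-1}\in S$ only when the last segment of $\cC-S$ on $P$ is the single vertex $v_n$. In exactly that case $v_n$ has both $\cC$-neighbours in $S$, and hence three $G_0$-neighbours in $S$, not two.

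The correct count is as follows:
\begin{itemize}
\item If $v_n$ ends a segment with at least two vertices, then $v_{n-1}\notin S$, and $v_n$ has exactly two $G_0$-neighbours in $S$: its $\cC$-neighbour and its chord.
\item If $v_n$ is a one-vertex segment, it has three $G_0$-neighbours in $S$, one of which is $v_{n-1}$.
\end{itemize}
In both cases $\deg_\Gamma(P)=2-[v_ns_0\in E(G_0)]\in\{1,2\}$.

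With this the proof closes at once. $\Gamma$ has maximum degree $2$, and its degree-$1$ vertices are precisely the closable paths. $P_0$ has degree exactly $1$: the only rotation is via the chord kept at $x$, which exists by domination and whose other end is neither $s_0$ nor $v_{n-1}$. So the component of $P_0$ is a path whose other end is a closable $P\neq P_0$. Your worry about the process returning to $P_0$ disappears. Also $P+v_ns_0\neq\cC$ follows from the bijection you already noted between closable paths and Hamilton cycles through $s_0y$ with $\cC'-S=\cC-S$. No fallback to parity counting in $G$ is needed.

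One small point: you need $S\neq\varnothing$ to choose $s_0$. The statement as written is actually false for $S=\varnothing$, since both hypotheses hold vacuously while the conclusion forces $\cC'=\cC$. In the paper's application $S$ is nonempty, so this is harmless.

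Once corrected, your argument proves the lemma directly under the weakened domination hypothesis. The paper instead deduces it from Thomassen's fully dominating version.
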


We note that, in the original statement in \cite{Tho97}, the set $S$ actually dominates the entire vertex set.
This is however not necessary, since the final condition $\cC-S=\cC'-S$ implies that the edges of~$G\setminus \cC$ between $S$ and $V(G)\setminus (S\cup N_{\cC}(S))$ cannot be used to construct the new Hamilton cycle $\cC'$.

Using this, we can prove the following result, which we will use to show that, if we have a Hamilton cycle~$\cC$ which contains a small set $E\subseteq E(\cC)$ of ``good'' edges which we want to preserve and a graph~$G$ consisting of ``desirable'' edges we would like to incorporate is sufficiently dense, then there exists a Hamilton cycle $\cC'$ which still contains all the edges in $E$ as well as at least one new edge from~$G$.
The proof is straightforward: we simply show that, by choosing a random set of vertices, with high probability this set will satisfy the properties required for it to play the role of $S$ in \cref{lem:thomassen}.

\begin{corollary}\label{lem:thomassen_new}
    Let $0<1/n\ll 1$.
    Let $G$ be an $n$-vertex graph and $\cC$ a Hamilton cycle on $V(G)$.
    Let $E\subseteq E(\cC)$ and $B\subseteq V(G)$ be such that $B'\coloneqq B\cup V(E)\neq V(G)$.\COMMENT{This condition is needed so that the next condition is not vacuously true, in which case the statement might not hold.}
    Suppose that for all $v\in V(G)\setminus B'$ we have that $d_{G}(v)\geq\sqrt{n}\log^2n+3|B'|+2$.
    Then $G\cup \cC$ contains a Hamilton cycle~$\cC'$ different from~$\cC$ such that $E\subseteq E(\cC')$.
\end{corollary}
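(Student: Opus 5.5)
We will apply \cref{lem:thomassen} to the graph $H\coloneqq G\cup\cC$ with Hamilton cycle $\cC$, and a suitable random set $S\subseteq V(G)\setminus B'$. If $S$ avoids $B'$, then no edge of $E$ is incident to $S$. The Hamilton cycle $\cC'$ given by \cref{lem:thomassen} satisfies $\cC-S=\cC'-S$, so it contains every edge of $\cC$ not incident to~$S$, and hence $E\subseteq E(\cC')$. It therefore suffices to find $S\subseteq V(G)\setminus B'$ which is $\cC$-independent and dominates $N_{\cC}(S)$ in $H\setminus\cC$.

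Fix $p\coloneqq \log n/\sqrt n$ and choose $S_0$ by including each vertex of $V(G)\setminus B'$ independently with probability~$p$. For each $v\in V(G)\setminus B'$, let $D(v)\coloneqq N_G(v)\setminus(B'\cup N_{\cC}(v)\cup N_{\cC}(N_{\cC}(v)))$. This removes at most $|B'|+2+2$ vertices, so by the degree assumption $|D(v)|\geq\sqrt n\log^2n$ (the $3|B'|$ slack also absorbs the later corrections below).

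We then make $S_0$ $\cC$-independent. Let $S$ be $S_0$ minus every vertex having a $\cC$-neighbour in $S_0$. Then $S$ is $\cC$-independent, and each vertex is deleted with probability at most $2p$. A vertex $u\in D(v)$ is kept in $S$ with probability at least $p(1-p)^2$, and keeping depends only on $u$ and its two $\cC$-neighbours. We now check domination. Let $x\in N_{\cC}(S)$; then $x\notin S$. Since $S\subseteq V(G)\setminus B'$ and is $\cC$-independent, we need $x$ to have a $G\setminus\cC$ neighbour in~$S$. There are two cases.

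If $x\in B'$, domination is not guaranteed. So we additionally require that $S$ contains no $\cC$-neighbour of $B'$, by excluding $N_{\cC}(B')$ from the sampling. This costs at most $2|B'|$ further vertices of each $D(v)$, which the $3|B'|$ in the hypothesis covers. With this restriction, every $x\in N_{\cC}(S)$ lies outside~$B'$.

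For $x\notin B'$, we need $S\cap D'(x)\neq\emptyset$, where $D'(x)\coloneqq D(x)\setminus N_{\cC}(B')$ has size at least $\sqrt n\log^2 n$. Pick a subset of $D'(x)$ of size at least $|D'(x)|/3$ in which the vertices are pairwise at $\cC$-distance at least~$3$. The survival events of these vertices are independent, each with probability at least $p/2$. Hence the probability that $D'(x)\cap S=\emptyset$ is at most $(1-p/2)^{\sqrt n\log^2n/3}\leq \nume^{-\log^3 n/6}=o(1/n)$. A union bound over all $x$ shows that a.a.s.\ every $x\notin B'$ has a neighbour in $S$ via an edge of $G$. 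This edge is not in $\cC$, since we removed $N_{\cC}(x)$ from $D(x)$.

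Finally, $S\neq\emptyset$ with high probability, because $V(G)\setminus (B'\cup N_\cC(B'))$ is nonempty and in fact large. Indeed, the hypothesis $B'\ne V(G)$ gives a vertex $v$ of degree at least $\sqrt n\log^2 n+3|B'|$, so $n$ is much larger than $|B'|$. This nonemptiness is needed, since otherwise \cref{lem:thomassen} would be vacuous. The main care point is the bookkeeping of the correlations between survival and domination. We handle it by the distance-$3$ subset trick above, using only the rough independence of $S\cap D'(x)$ rather than any concentration bound.
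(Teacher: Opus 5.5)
Your argument is correct and follows the paper's route: sample a random subset of $V(G)\setminus(B'\cup N_{\cC}(B'))$, show that it is $\cC$-independent and dominates all of $V(G)\setminus B'$ via edges of $G\setminus\cC$ (which also forces $S\neq\varnothing$), and then apply \cref{lem:thomassen} to $G\cup\cC$. The only difference is in the probabilistic step. The paper takes $p=1/\sqrt{n\log n}$, small enough that $\cC$-independence follows from a plain union bound ($n\cdot 2p^2=o(1)$) yet large enough that $p\sqrt{n}\log^2 n=\log^{3/2}n$ gives domination, so it needs no deletion step or distance-$3$ decoupling. Your alteration instead makes independence deterministic, and would even work with constant $p$, so the $\sqrt{n}\log^2 n$ term could be lowered to $O(\log n)$; this is not needed for the application. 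The price is extra bookkeeping: removing $N_{\cC}(N_{\cC}(x))$ is unnecessary and is why your count gives $\sqrt{n}\log^2 n-2$ rather than $\sqrt{n}\log^2 n$, which is harmless.
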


\begin{proof}
    Let $A\coloneqq V(G)\setminus(N_{\cC}(B')\cup B')$ and $G'\coloneqq G\setminus\cC$, and note that $|N_{G'}(v)\cap A|\geq\sqrt{n}\log^2n$ for all $v\in V(G)\setminus B'$.
    Now let $S\subseteq A$ be sampled by including each vertex of $A$ independently with probability $1/(n\log n)^{1/2}$.
    We shall show that a.a.s.\ $S$ satisfies the properties required in \cref{lem:thomassen}.

    First, note that, for each vertex $v\in A$,\COMMENT{At least two of the three vertices we consider must be in $S$, there being two possible ways for this to happen ($v$ itself must be in $S$), and each is added to $S$ independently.} 
    \[\mathbb{P}[\{v\in S\}\wedge \{S\cap N_\cC(v)\neq\varnothing\}]\leq\frac{2}{n\log n}.\]
    It follows immediately by a union bound that a.a.s.\ $S$ is $\cC$-independent.

    Second, for any fixed vertex $v\in V(G)\setminus B'$, we have that
    \COMMENT{For the last inequality we use that $1-x\leq e^{-x}$ for any $x\in \mathbb{R}$.}
    \[\mathbb{P}[S\cap N_{G'}(v)=\varnothing]=\left(1-\frac{1}{\sqrt{n\log n}}\right)^{|N_{G'}(v)\cap A|}\leq\nume^{-\log^{3/2}n}.\]
    Again by a union bound, we conclude that a.a.s.\ $S$ dominates $V(G)\setminus B'$ in $G'$, and thus, in particular, it dominates $N_\cC(S)$.

    Since both statements hold a.a.s., it follows that there is a set $S\subseteq A$ which satisfies the properties required for \cref{lem:thomassen}.
    By the choice of $A$, the conclusion follows immediately by applying this lemma.
\end{proof}

\subsection{Crossing and non-crossing edges in graph drawings}\label{sec:drawings}

We will need some auxiliary results about drawings of graphs in the plane.
While there exist much more general results, throughout this paper the word \emph{drawing} refers to a drawing of a graph on the plane where vertices are represented with points and edges are represented with straight-line segments, and such that for every $uv\in E(G)$ there is no other vertex of $G$ on the line going through $u$ and $v$.
Throughout this section, we identify the vertices and edges of a graph with their representations in its drawing.
We say that two edges are \emph{crossing} if they intersect at a point other than one of their endpoints, and that they are \emph{disconnected} if they are not crossing and not incident to each other.

As previously discussed in \cref{sect:C4switch}, we will need to consider whether the edges of an implanted~$C_4$ are in a ``crossing'' or ``non-crossing'' configuration, as this determines whether the corresponding $C_4$-switch can be used to increase the number of components of a $2$-factor.
In general, we will also need to consider whether different collections of implanted $C_4$'s ``cross'' each other or not.
For this reason, we will need sufficient conditions for any drawing of a graph to contain three pairwise crossing or pairwise disconnected edges.
\citet{AAPPS97} showed that any drawing of a sufficiently dense graph contains three pairwise crossing edges.

\begin{lemma}[{\cite[Theorem~1.1]{AAPPS97}}]\label{lm:crossing}
     There exists a constant $c$ such that any drawing of an $n$-vertex graph with at least $cn$ edges contains three pairwise crossing edges.
 \end{lemma}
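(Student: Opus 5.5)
The statement is the theorem of Agarwal, Aronov, Pach, Pollack and Sharir that drawings with no three pairwise crossing edges have linearly many edges. The natural route is the standard charging argument that reduces the problem to drawings with no two crossing edges, i.e.\ plane straight-line graphs. Fix a drawing $D$ of a graph $G$ with no three pairwise crossing edges. The aim is to show $e(G)\le cn$ for some absolute $c$; the contrapositive is the lemma.

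\textbf{Step 1: reduce to $2$-colourability of the crossing graph up to a constant.} Let $H$ be the \emph{intersection graph} of the edges of $D$: its vertices are the edges of $G$, and two are adjacent when they cross. By hypothesis $H$ is triangle-free. For segment intersection graphs, triangle-free does not give bounded chromatic number in general. So instead I would use the ordering trick of the original proof. Orient each segment from left to right, so that no segment is vertical after a generic rotation (the general-position assumption allows this). For crossing segments $e,f$, say $e\prec f$ if at the crossing point $e$ has the larger slope. Within any crossing family this relation is transitive on chains, because three pairwise crossing segments are forbidden. Hence the longest $\prec$-chain starting at each segment is bounded: a chain $e_1\prec e_2\prec e_3$ in which $e_1,e_3$ do not cross still needs care. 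The cleaner route is to define the colour of $e$ as the length of the longest chain of crossings ``from above'', and to show that $O(1)$ classes each form a noncrossing family. I would aim to show the chromatic number of $H$ is at most a fixed constant $K$. This can be done either via the ``left-endpoint'' decomposition, under which any two crossing segments within a class yield a third crossing both, or, more safely, by partitioning the segments into $O(1)$ classes according to which of two crossing segments has its left endpoint further left.

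\textbf{Step 2: conclude.} Each colour class is a set of pairwise non-crossing segments, so it forms a plane graph on at most $n$ vertices. A plane graph has at most $3n-6$ edges, so $e(G)\le K(3n-6)$, and $c=3K+1$ works.

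The main obstacle is Step 1. The bounded-colouring claim is genuinely delicate, and the known proof does not go through a bounded-colouring claim. Instead it uses a more careful charging argument: for each edge $e$, consider the edges crossing $e$. Because those edges are pairwise non-crossing (a crossing pair among them would form a triangle in $H$ together with $e$), they form a plane configuration. Local planarity then lets one apply a discharging or Euler-formula count to the whole arrangement. If the colouring attempt fails, I would fall back to this approach. The fallback is the crossing-lemma-type argument: take a random vertex subset with probability $p$, remove one edge from each crossing, and bound crossings via the local planarity of the crossing neighbourhoods. This shows the number of crossings is $O(e(G)+n)$. The crossing lemma says the number of crossings is at least $e(G)^3/(64n^2)$ once $e(G)\ge 4n$. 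Combining the two bounds gives only $e(G)=O(n)$ up to constants, which is the statement.
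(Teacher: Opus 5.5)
Your proposal has a genuine gap: both routes rest on claims that are false, so neither can be completed. (The paper does not prove this lemma; it quotes it from Agarwal, Aronov, Pach, Pollack and Sharir.)

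\textbf{Step~1 cannot work.} It asks for an absolute constant $K$ such that the crossing graph $H$ of every straight-line drawing with no three pairwise crossing edges is $K$-colourable. No such $K$ exists. After a small perturbation, any finite family of segments with a triangle-free intersection graph becomes a straight-line drawing of a perfect matching whose crossing graph is that same intersection graph. Pawlik, Kozik, Krawczyk, Laso\'n, Micek, Trotter and Walczak constructed triangle-free segment intersection graphs with arbitrarily large chromatic number. So the remark you make at the start of Step~1 already refutes Step~1, and no ordering device can rescue it. Your slope relation $\prec$ is an acyclic orientation of $H$, but its chains are unbounded: take segments of decreasing slope, each crossing only the next. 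The rule ``which of two crossing segments has its left endpoint further left'' orients pairs rather than splitting the segments into $O(1)$ classes. Any colouring-type argument would have to use that the $e(G)$ segments share only $n$ endpoints, and yours never does.

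\textbf{The fallback fails for a more elementary reason.} Its key claim is that such a drawing has $O(e(G)+n)$ crossings. This is false, because the hypothesis does not bound how often a single edge is crossed. Take the $k$ horizontal segments from $(0,i)$ to $(k+1,i)$ and the $k$ vertical segments from $(j,0)$ to $(j,k+1)$, for $i,j\in[k]$. Any three of these edges include two parallel ones, so no three pairwise cross. Yet there are $k^2=n^2/16$ crossings, while $e(G)+n=6k$. Moreover, the mechanism you describe, deleting one edge from each crossing pair to leave a plane graph, is exactly what gives the \emph{lower} bound $e(G)-3n$ on the number of crossings in the proof of the crossing lemma. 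It cannot give an upper bound, so combining it with the crossing lemma yields nothing.

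\textbf{What is missing} is an argument that uses quasi-planarity only locally and bounds neither colourings nor crossings. One example is the short discharging proof of Ackerman and Tardos:
\begin{enumerate}
\item Planarise the drawing, turning crossings into vertices.
\item Give each face $f$ the charge $|f|+v(f)-4$, where $|f|$ is the number of edge-pieces on its boundary and $v(f)$ the number of original vertices on it.
\item By Euler's formula, for a connected planarisation the charges sum to $4n-8$.
\item The only faces with negative charge would be triangles with $v(f)=0$, i.e.\ three pairwise crossing edges, which are excluded.
\item The real work is a local redistribution after which every face keeps at least a constant multiple of $v(f)$.
\item Since $\sum_f v(f)=2e(G)$, this gives $e(G)=O(n)$.
\end{enumerate}
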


An analogous statement for disconnected edges was proved by \citet{GKK96}.

\begin{lemma}[{\cite[Theorem~1]{GKK96}}]\label{lm:noncrossing}
    Any drawing of an $n$-vertex graph with at least $3n+1$ edges contains three pairwise disconnected edges.
\end{lemma}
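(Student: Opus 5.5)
Since the paper states \cref{lm:noncrossing} with a citation to \citet{GKK96}, we may simply invoke it. If a self-contained argument were wanted, I would use a charging scheme in the spirit of the classical Hopf--Pannwitz/Erd\H{o}s argument. That argument shows that a straight-line drawing with no two disconnected edges has at most $n$ edges: one deletes, at every vertex, a single ``extreme'' incident edge and checks that nothing survives. The plan is to show that we can delete at most three edges per vertex, for at most $3n$ edges in total, so that if any edge survives, three pairwise disconnected edges can be read off. This would give the bound $3n+1$.

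First, rotate the drawing slightly so that no two vertices share an $x$-coordinate. For every edge $uv$ with $u$ to the left of $v$, call $u$ its left endpoint and $v$ its right endpoint. For every vertex $u$, look at the edges having $u$ as left endpoint, ordered by slope. Delete the steepest one (``top'') and the shallowest one (``bottom''). In addition, among the edges having $u$ as right endpoint, delete one extreme edge; I would try the one whose left endpoint lies furthest to the left. This removes at most $3n$ edges, so some edge $e=uv$ survives.

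Next, exploit the surviving edge. Since $e$ was not the top or bottom edge at its left endpoint $u$, there are edges $ua$ above $e$ and $ub$ below $e$ emanating rightwards from $u$. These, together with $e$, form a ``fan'' at $u$. The third deletion at the right endpoint $v$ provides an edge $wv$ whose left endpoint $w$ lies further left than $u$. I would then show that the edges $ua$ and $ub$ cannot both be hit by the edges emanating from $a$ and $b$. More precisely, the top and bottom edges at $a$ and $b$ that were deleted lie in a half-plane that is separated from one of the fan edges. One then selects three edges, one from each of three ``layers'' (above the fan, the fan itself, below the fan), which lie in disjoint regions and are therefore pairwise disconnected. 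The argument is a case analysis on whether $a$ or $b$ lies to the right of $v$ and on the sides of the lines through the fan edges.

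The main obstacle is choosing the three deletion rules so that the case analysis actually closes. A naive choice, such as the top and bottom edge at each vertex together with an arbitrary third edge, leaves configurations in which all candidate third edges cross one of the other two. If the direct case analysis fails, my first fallback is a two-step argument. Deleting the top and bottom rightward edges at each vertex (at most $2n$ edges) leaves at least $n+1$ edges, so Hopf--Pannwitz yields two disconnected edges in the remaining graph. One then uses the deleted extreme edges at their endpoints to find a third edge separated from both, at the cost of at most one further deletion per vertex.
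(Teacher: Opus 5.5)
Citing \cite{GKK96} is exactly what the paper does: it contains no proof of \cref{lm:noncrossing} and simply imports the result, so your first sentence is all that is needed. The self-contained route you sketch, however, cannot be completed, and the problem is not just an unfinished case analysis. Your deletion rules are \emph{local}: the at most three edges deleted at a vertex $x$ are chosen by looking only at the edges incident to $x$. Since only $u$ and $v$ can delete $uv$, whether $uv$ survives depends only on the edges at $u$ and at $v$.

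Now let $G'$ be the drawing formed by just the edges of $G$ at $u$ or $v$. Any three edges of $G'$ include two through $u$ or two through $v$, so $G'$ has no three pairwise disconnected edges. But $u$ and $v$ see the same incident edges in $G'$ as in $G$, so if $uv$ survives in $G$ it also survives in $G'$. Hence a local rule for which ``a survivor implies three pairwise disconnected edges'' holds would delete every edge of every drawing. That would give $e(G)\le 3n$ for all drawings, which is false for $K_n$ in convex position once $n\geq 8$.

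Concretely, take only the edges $ua,uv,ub,wv$, where $w$ lies left of $u$, the vertices $v,a,b$ lie right of $u$, and $ua$, $ub$ have larger and smaller slope than $uv$. Your rules delete $ua$, $ub$ and $wv$, and $uv$ survives. A survivor does give two disconnected edges, namely $wv$ and whichever of $ua,ub$ lies on the side of $uv$ away from $wv$. But that is just the Hopf--Pannwitz conclusion again. The missing ingredient is an edge avoiding both $u$ and $v$. Any charging proof of this type must therefore let the deletions at a vertex depend on edges not incident to it, or count in a different way altogether.

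The fallback has a similar defect. The third edge is supposed to come from the deleted extreme edges at the endpoints of the two disconnected edges $f_1,f_2$ produced by Hopf--Pannwitz. Every such edge shares a vertex with $f_1$ or $f_2$, so it is never disconnected from both, and the sketch names no other source for the third edge. The budget also does not close. After the $2n$ top/bottom deletions you are guaranteed only the $n+1$ edges that Hopf--Pannwitz needs as a black box. There is no room for a further deletion per vertex unless you reopen its proof, and then you face the same missing third edge.

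So, as it stands, your argument for \cref{lm:noncrossing} rests entirely on the citation. That is legitimate here, since it is precisely what the paper does.
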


\section{Proof}

We now turn to the proof of \cref{thm:main}. 
Our argument follows the strategy outlined in \cref{sec:sketch} and is divided into three principal stages.
As indicated at the end of \cref{sec:sketch}, the first step is to obtain a partition of the given graph $G$ such that every pair of vertices in the same part have many common neighbours. 
We prove that such a partition exists in \cref{sec:part}.
In the second stage, we prove that $G$ contains a Hamilton cycle with many implanted copies of $C_4$ (see \cref{sec:embed}). 
This forms the core of our argument.
For the third stage, in \cref{sec:switches} we show that, given a Hamilton cycle (or, in more generality, a $2$-factor with at most $k$ cycles) with many implanted copies of $C_4$, one can perform $C_4$-switches to obtain a $2$-factor with the prescribed number of cycles~$k$.
Finally, in \cref{sec:main_proof}, we combine these ingredients to complete the proof of \cref{thm:main}.

\subsection{Partitioning the graph}\label{sec:part}

The next lemma shows that the vertex set of any graph of a sufficiently large minimum degree can be partitioned into few parts such that any set of $m$ vertices in a common part have many common neighbours.
In order to find many implanted $C_4$'s in some Hamilton cycle (see \cref{cor:stage}), we will only need the case $m=2$, which implies that any two vertices in a common part belong to many common~$C_4$'s.
The proof is inspired by the dependent random choice method developed by \citet{AKS03}.

\begin{lemma}\label{lm:part}
    Let $0<1/n\ll\eps\ll \zeta < 1/2$ and $0<1/n\ll \eps \ll 1/m <1$\COMMENT{Taking $\eps\leq \zeta^3/(6m)$ suffices.}.
    Let $G$ be an $n$-vertex graph with $\delta(G)\geq n^{1-\eps}$.
    Then, there exist a positive integer $s\leq n^\zeta$ and a partition $V(G)=V_1\cup \dots \cup V_s$ such that, for all $i\in [s]$ and $S\subseteq V_i$ with $|S|=m$, we have that $\lvert\bigcap_{v\in S}N_G(v)\rvert\geq n^{1-\zeta}+1$.
\end{lemma}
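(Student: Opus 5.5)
The plan is to use dependent random choice, as in \citet{AKS03}, to peel off the parts $V_1,V_2,\dots$ greedily. At each step we take a large subset of the remaining vertices in which every $m$-set has many common neighbours in the whole of $V(G)$, and we show that each step removes a fixed polynomial fraction of what remains.

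Set $D\coloneqq n^{1-\zeta}+1$, $t\coloneqq\lceil 2m/\zeta\rceil$, and let $R\subseteq V(G)$ be the set of vertices not yet assigned to a part. Choose vertices $x_1,\dots,x_t\in V(G)$ independently and uniformly at random, with repetition. Let $A$ be the set of vertices of $R$ adjacent to all of $x_1,\dots,x_t$. Each $v\in R$ lies in $A$ with probability $(d_G(v)/n)^t\geq n^{-\eps t}$, so $\mathbb{E}|A|\geq |R|n^{-\eps t}$. Call an $m$-set $S\subseteq R$ \emph{bad} if $\lvert\bigcap_{v\in S}N_G(v)\rvert<D$. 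A bad $S$ lies in $A$ only if all $x_j$ fall into its common neighbourhood. This happens with probability at most $(D/n)^t\leq (2n^{-\zeta})^t$. Hence the expected number $Y$ of bad $m$-sets inside $A$ satisfies
\[
\mathbb{E}Y\leq n^m 2^t n^{-\zeta t}\leq 2^t n^{-m}.
\]
Since $\eps\ll\zeta,1/m$, the exponent $\eps t\approx 2m\eps/\zeta$ is tiny. So $\mathbb{E}[|A|-Y]\geq |R|n^{-\eps t}-2^tn^{-m}\geq |R|n^{-\eps t}/2$, using $|R|\geq1$ and $n$ large.

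Fix an outcome with $|A|-Y\geq |R|n^{-\eps t}/2$. Deleting one vertex from each bad $m$-set in $A$ yields a set $A'\subseteq R$ with $|A'|\geq |R|n^{-\eps t}/2$, and every $m$-subset of $A'$ has at least $D$ common neighbours. Taking $\eps\leq \zeta^2/(8m)$ gives $n^{-\eps t}/2\geq n^{-\zeta/2}$, so $|A'|\geq |R|n^{-\zeta/2}$. In particular $A'\neq\varnothing$, because $|A'|$ is a positive integer whenever the lower bound is positive. We declare $A'$ to be the next part and replace $R$ by $R\setminus A'$. If at some point $|R|<m$, the required property is vacuous, so the remaining vertices can form one final part (or the process simply continues).

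After $j$ steps at most $n(1-n^{-\zeta/2})^j\leq n\exp(-jn^{-\zeta/2})$ vertices remain. Hence the process ends within $s\leq n^{\zeta/2}\cdot 2\ln n+1\leq n^{\zeta}$ steps, which gives the partition. The main point to check is the balance of exponents. The density loss $n^{-\eps t}$ must beat the bad-set count, which needs $t\zeta>m+\eps t$, while staying above $n^{-\zeta/2}$ so that the number of parts stays below $n^\zeta$. The choice $t\approx 2m/\zeta$ with $\eps\ll\zeta^2/m$ reconciles the two, and nothing else in the argument is delicate.
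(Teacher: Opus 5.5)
Your proof is correct, and it takes a different route from the paper's. You apply the basic dependent-random-choice step greedily: in each round you sample $t=\lceil 2m/\zeta\rceil$ random vertices from all of $V(G)$ and take their common neighbourhood $A$ inside the leftover set $R$. You then delete one vertex from each bad $m$-set. The number of rounds is controlled by the geometric shrinking $|R|\mapsto(1-n^{-\zeta/2})|R|$, which gives $O(n^{\zeta/2}\log n)\le n^{\zeta}$ parts.

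The paper instead builds the whole partition at once. It first passes to a random bipartite subgraph $G'$ between halves $A$ and $B$ with $\delta(G')\ge n^{1-2\eps}$, and partitions each half separately, using test vertices from the other half. It fixes a single random set $M\subseteq A$ of size $n^{6\eps}$ with no bad $m$-sets relative to $M$ (\cref{claim:M}). A counting argument (\cref{claim:R}) shows that every vertex of $B$ lies in an $n^{1-\zeta^2}$-vertex clique of an auxiliary graph, all of whose vertices share a common $\ell$-set of neighbours in $M$. Vertices are then grouped according to a randomly chosen such witness $\ell$-set. A second Chernoff step shows every class has size at least $n^{1-\zeta}$, and this is how the paper bounds the number of parts.

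Your argument is shorter and more elementary: no bipartition, no auxiliary colouring, no second concentration step. The cost is that your parts may be very small. The paper's version additionally guarantees that all parts have size at least $n^{1-\zeta}$, and in fact gives at most $2n^{6\eps\ell}$ parts in total. Neither extra property is part of the statement. Nor is either needed in \cref{cor:stage}, where parts of size at most $n^{1-\eta'}$ are explicitly handled, so your version suffices for the application.
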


\begin{proof}
    We first claim that there exists a partition $V(G)=A\cup B$ such that $||A|-|B||\leq 1$ and the bipartite subgraph~$G'$ of~$G$ induced by~$A$ and~$B$ has minimum degree
    \begin{equation}\label{equa:G'mindeg}
        \delta(G')\geq n^{1-2\eps}.
    \end{equation}
    Indeed, consider a random equipartition of $V(G)$, obtained by choosing a uniformly random subset $A\subseteq V(G)$ of size $|A|=\lfloor n/2\rfloor$ and letting $B\coloneqq V(G)\setminus A$.
    We then have that $d_{G'}(v)$ follows a hypergeometric distribution with
    \[\mathbb{E}[d_{G'}(v)]=d_G(v)\frac{|A|\pm1}{n}\geq \frac{d_G(v)}{3}\]
    for each $v\in V(G)$.
    By \cref{lem:Chernoff} and a union bound,
    \begin{align*}
        \mathbb{P}[\delta(G')< n^{1-2\eps}]&\leq \sum_{v\in V(G)}\mathbb{P}\left[|d_{G'}(v)-\mathbb{E}[d_{G'}(v)]|>\frac{1}{2}\mathbb{E}[d_{G'}(v)]\right]\\
        &\leq \sum_{v\in V(G)}2\nume^{-\mathbb{E}[d_G(v)]/12}\leq 2n \nume^{-n^{1-\eps}/36}=o(1).
    \end{align*}
    
    By symmetry, it now suffices to find a partition of $B$ into parts of size at least $n^{1-\zeta}$ which satisfy the desired property.
    Partitioning~$A$ analogously will then immediately yield a partition 
    \[V(G)=A\cup B=V_1\cup \dots \cup V_s\]
    with $s\leq n^\zeta$ which satisfies the desired property.
    
    Let $\ell\coloneqq\lceil2m/\zeta\rceil$.
    Given a set $M\subseteq A$ of size $|M|=n^{6\eps}$, we say that a subset $S\subseteq B$ of size $m$ is \emph{bad} with respect to $M$ if $\lvert\bigcap_{v\in S}N_{G'}(v)\rvert\leq n^{1-\zeta}$ but $\lvert\bigcap_{v\in S}N_{G'}(v)\cap M\rvert\geq \ell$.

    \begin{claim}\label{claim:M}
        There exists a set $M\subseteq A$ of size $|M|=n^{6\eps}$ such that $|N_{G'}(v)\cap M|\geq n^{3\eps}$ for all $v\in B$ and such that~$B$ has no subset of size $m$ which is bad with respect to~$M$.
    \end{claim}

    \begin{claimproof}
        Let $M$ be a subset of~$A$ of size $|M|=n^{6\eps}$ chosen uniformly at random.
        For each $v\in B$, let \mbox{$X_v\coloneqq|N_{G'}(v)\cap M|$}.
        We have that $X_v$ follows a hypergeometric distribution with $\mathbb{E}[X_v]\geq n^{4\eps}$, so \cref{lem:Chernoff} implies that
        \[
            \mathbb{P}[X_v< n^{3\eps}]\leq \mathbb{P}\left[X_v<\frac12 \mathbb{E}[X_v]\right]\leq \mathbb{P}\left[|X_v-\mathbb{E}[X_v]|> \frac12\mathbb{E}[X_v]\right]\leq 2\nume^{-n^{4\eps}/12}.
        \]
        It follows immediately by a union bound that a.a.s.\ $|N_{G'}(v)\cap M|=X_v\geq n^{3\eps}$ for all $v\in B$.

        Now we want to bound the probability that $B$ contains a subset which is bad with respect to~$M$.
        Fix a set $S\subseteq B$ of size $|S|=m$, and assume that $\lvert\bigcap_{v\in S}N_{G'}(v)\rvert\leq n^{1-\zeta}$.\COMMENT{Otherwise, we already know that $S$ is not bad for $M$.}
        Then, fix a set $L\subseteq\bigcap_{v\in S}N_{G'}(v)$ of size $|L|=\ell$.
        The probability that $S$ is bad for $M$ because $L\subseteq M$ is\COMMENT{We have
        \[\frac{\binom{n/2}{n^{6\eps}-\ell}}{\binom{n/2}{n^{6\eps}}}=\frac{\frac{(n/2)_{n^{6\eps}-\ell}}{(n^{6\eps}-\ell)!}}{\frac{(n/2)_{n^{6\eps}}}{(n^{6\eps})!}}=\frac{(n^{6\eps})_\ell}{(n/2-n^{6\eps}+\ell)_\ell}\leq\frac{n^{6\eps\ell}}{\frac12\left(\frac{n}{2}\right)^{\ell}}=2\cdot2^\ell n^{(6\eps-1)\ell}\]
        (where for the inequality we assume that $n$ is sufficiently large).}
        \[\frac{\binom{n/2}{n^{6\eps}-\ell}}{\binom{n/2}{n^{6\eps}}}\leq2\cdot2^\ell n^{(6\eps-1)\ell}.\]
        By a union bound over the at most $\inbinom{n/2}{m}$ sets $S\subseteq B$ of size $|S|=m$ with $\lvert\bigcap_{v\in S}N_{G'}(v)\rvert\leq n^{1-\zeta}$ and over the at most $\inbinom{n^{1-\zeta}}{\ell}$ sets of size $\ell$ within $\bigcap_{v\in S}N_{G'}(v)$, we conclude that the probability that~$B$ contains a bad subset with respect to~$M$ is at most\COMMENT{The upper bounds are immediate using the hierarchy, and noting that the additional factor of $2$ above can easily be cancelled with a factor coming from $\inbinom{n/2}{m}$.}
        \[\binom{n/2}{m}\binom{n^{1-\zeta}}{\ell} \frac{\binom{n/2}{n^{6\eps}-\ell}}{\binom{n/2}{n^{6\eps}}}\leq n^m \cdot n^{(1-\zeta)\ell}\cdot 2^\ell n^{(6\eps-1)\ell}\leq n^{-1}.\]
        Therefore, a set $M$ with the desired properties must exist.
    \end{claimproof}

    Let $M\subseteq A$ be a set with the properties guaranteed by \cref{claim:M}.

    \begin{claim}\label{claim:R}
        For any set $T\subseteq M$ of size $|T|=n^{3\eps}$, there exists a set $R\subseteq B$ of size $|R|=n^{1-\zeta^2}$ such that $\lvert\bigcap_{v\in R}N_{G'}(v)\cap T\rvert\geq \ell$.
    \end{claim}

    \begin{claimproof}
        By \eqref{equa:G'mindeg}, we have that
        \[e_{G'}(T, B)\geq n^{1+\eps}.\]
        Now suppose that the statement is false, that is, that for every set $R\subseteq B$ of size $|R|=n^{1-\zeta^2}$ we have that $\lvert\bigcap_{v\in R}N_{G'}(v)\cap T\rvert<\ell$.
        In other words, every set $L\subseteq T$ of size $|L|=\ell$ has fewer than $n^{1-\zeta^2}$ common neighbours in $B$.
        This implies that\COMMENT{By adding over all sets $L$, we are counting every edge which is contained in a copy of $K_{1,\ell}$ (with the vertex on the side of size $1$ contained in $B$), possibly multiple times; to account for all edges which are not contained in such stars, we add the term $|B|(\ell-1)$.
        For the last inequality, we use the hierarchy (roughly, it suffices that $\eps<\zeta^3/6m$).}
        \begin{align*}
            e_{G'}(T, B)&< \sum_{L\in \binom{T}{\ell}}n^{1-\zeta^2}\ell+|B|(\ell-1)
            = \binom{n^{3\eps}}{\ell}n^{1-\zeta^2} \ell+|B|(\ell-1)\leq n^{1+3\eps \ell-\zeta^2}\ell+n \ell
            <n^{1+\eps},
        \end{align*}
        a contradiction.
    \end{claimproof}

    Consider the auxiliary graph $H$ on $B$ where two vertices form an edge if and only if they have at least $\ell$ common neighbours in $M$.
    We now multicolour the edges of $H$ with colour palette~$\inbinom{M}{\ell}$, and each edge $uv\in E(H)$ receives all colours in $\inbinom{N_{G'}(u)\cap N_{G'}(v)\cap M}{\ell}$.
    We say that a vertex $v\in B$ is \emph{good} with respect to a colour $L\in \inbinom{M}{\ell}$ if $v$ belongs to a monochromatic clique of colour $L$ of order at least~$n^{1-\zeta^2}$.
    Since every vertex of~$B$ has at least $n^{3\eps}$ neighbours in~$M$ by \cref{claim:M}, it follows from \cref{claim:R} that every vertex is good with respect to at least one colour.\COMMENT{Indeed, fix $v\in B$ and consider any set $T\subseteq N_{G'}(v)\cap M\subseteq M$ of size $|T|=n^{3\eps}$ (which exists by \cref{claim:M}).
    By \cref{claim:R}, there exists a set $R\subseteq B$ of size $|R|=n^{1-\zeta^2}$ such that $\lvert\bigcap_{u\in R}N_{G'}(u)\cap T\rvert\geq\ell$, that is, all pairs of vertices $u,u'\in R$ share a common colour (in fact, they share a common subset of colours: each of the $\ell$-sets contained in $\bigcap_{u\in R}N_{G'}(u)\cap T$, of which there exists at least one), so they form a monochromatic clique of size $n^{1-\zeta^2}$.
    Now, it could be that $v\notin R$, but since $T\subseteq N_{G'}(v)$, it follows that $uv$ also receives that same colour for all $u\in R$, and so $v$ is contained in a sufficiently large monochromatic clique.}
    For each $v\in B$, fix one such colour $L_v$ uniformly at random and independently from each other, and let $B_1\cup \dots \cup B_r$, for some $r\in\inbinom{|M|}{\ell}$, be the partition of~$B$ where two vertices $u,v\in B$ belong to a common part if and only if $L_u=L_v$.
    Since~$B$ contains no sets of size~$m$ which are bad with respect to~$M$ by \cref{claim:M}, each part~$B_i$ satisfies the desired property that, for every $S\subseteq B_i$ with $|S|=m$, we have that $\lvert\bigcap_{v\in S}N_G(v)\rvert\geq n^{1-\zeta}+1$.\COMMENT{Indeed, the properties of $M$ (there being no bad sets wrt $M$) guarantee that for any set $S\subseteq B$ of size $|S|=m$ with $\lvert\bigcap_{v\in S}N_{G'}(v)\rvert\leq n^{1-\zeta}$ we must have $\lvert\bigcap_{v\in S}N_{G'}(v)\cap M\rvert<\ell$.
    But for any set $S\subseteq B_i$ we have proved that $\lvert\bigcap_{v\in S}N_{G'}(v)\cap M\rvert\geq\ell$ (because, in particular, all their neighbourhoods contain the same $\ell$-set $L_v$), so we must have $\lvert\bigcap_{v\in S}N_{G'}(v)\rvert\geq n^{1-\zeta}+1$, as desired.}
    Moreover, as each colour which can potentially be chosen for a vertex can be chosen for at least $n^{1-\zeta^2}$ vertices, we have that\COMMENT{For the first inequality, simply notice that $|B_i|$ is a sum of independent Bernoulli random variables, each indicating, for each vertex $v\in B$, whether the colour picked for it is the colour $L$ that defines $B_i$ or not.
    Many of these variables may in fact be constants equal to $0$ (if the vertex is not good with respect to $L$), and for each other the probability of success is $1$ divided by the number of colours with respect to which the vertex is good; this is always bounded from below by $1/\inbinom{|M|}{\ell}$, and if there is one vertex which is good for $L$ (only such colours are relevant for the partition), then there are at least $n^{1-\zeta^2}$ such vertices (by the definition of them being good), thus yielding the lower bound on the expected size.
    The second bound follows by using the hierarchy (we require roughly $\eps<\zeta^3/6m$).}
    \[\mathbb{E}[|B_i|]\geq \frac{n^{1-\zeta^2}}{\binom{|M|}{\ell}}\geq n^{1-2\zeta^2}\]
    for each $i\in [r]$ and so, by \cref{lem:Chernoff} and the bound on $\zeta$, we have that
    \[\mathbb{P}[|B_i|< n^{1-\zeta}]\leq\mathbb{P}\left[|B_i|< \frac12\mathbb{E}[|B_i|]\right]\leq 2\nume^{-\mathbb{E}[|B_i|]/12}\leq 2\nume^{-n^{1-2\zeta^2}/12}.\]
    \COMMENT{We need this to tend to $0$ (faster than any polynomial, say), so we need to assume that $1-2\zeta^2>0\iff\zeta<1/\sqrt{2}$.
    We are also assuming that $n^{1-\zeta}=o(n^{1-2\zeta^2})$ for the first inequality, which only holds if $\zeta<1/2$.} 
    By a union bound over all $i\in[r]$, since $r\leq\inbinom{n^{6\eps}}{\ell}$, we conclude that a.a.s.\ $|B_i|\geq n^{1-\zeta}$ for all $i\in [r]$.
    Thus, there exists a partition $B=B_1\cup \dots \cup B_r$ where each part satisfies the desired property and has size at least $n^{1-\zeta}$, as desired.
\end{proof}

\subsection{Finding a Hamilton cycle with many implanted \texorpdfstring{$C_4$}{C4}'s}\label{sec:embed}

In this section, we prove the main ingredient of our proof, which states that we can find a Hamilton cycle with many implanted $C_4$'s.
Given a graph $G$ and a $2$-factor $\cC$ of $G$, we denote by $H_{\cC}$ the auxiliary graph on $E(\cC)$ where distinct non-incident edges $e,e'\in E(\cC)$ are adjacent in $H_{\cC}$ if and only if $G$ has a $C_4$ which contains both~$e$ and~$e'$ and no other edge of $\cC$.\COMMENT{When $\cC$ contains three or four edges of a $C_4$, we cannot do a switch.}
Thus, $e(H_{\cC})$ counts the number of~$C_4$'s which are implanted in~$\cC$.

\begin{lemma}\label{cor:stage}
    Let $0<1/n\ll\eps\ll \eta \leq 1$\COMMENT{Relationship between $\eps$ and $\eta$ mostly inherited from \cref{lm:part}, $\eps<\eta^3/720$ would work.}.
    Let $G$ be a Hamiltonian $n$-vertex graph with $\delta(G)\geq n^{1-\eps}$.
    Let $\cC$ be a Hamilton cycle of~$G$ and $E_0\subseteq E(\cC)$ be a set of size $|E_0|\leq n^{1-\eta}$.
    Then $G$ has a Hamilton cycle~$\cC'$ such that $E_0\subseteq E(\cC')$ and $e(H_{\cC'})\geq n^{2-\eta}$.
\end{lemma}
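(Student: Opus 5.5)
Throughout, I take $\zeta\coloneqq\eta^{2}/100$ (any $\zeta$ with $\eps\ll\zeta\ll\eta$ will do) and $m=2$ in \cref{lm:part}. This gives a partition $V(G)=V_1\cup\dots\cup V_s$ with $s\leq n^{\zeta}$ in which any two vertices of a common part have at least $n^{1-\zeta}$ common neighbours. The consequence I will use repeatedly is the following. If $uv\in E(\cC)$ and $w\in N_G(v)$ lies in the same part as $u$, then $u$ and $w$ have at least $n^{1-\zeta}$ common neighbours $x$. Each such $x$ gives a $C_4$ $u\,v\,w\,x$ through the edges $uv$ and $wx$. Hence an edge of a Hamilton cycle at $v$ "sees" many potential $C_4$-partners.

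The proof follows the two-stage maximality scheme of \cref{sect:sketch2}. Both stages use \cref{lem:thomassen_new}, with $E$ the set of edges to be preserved and $B$ a small exceptional set.

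\textbf{Stage 1 (disjoint good sets).} Call a set $F\subseteq E(\cC)$ of at most $n^{\eta/10}$ edges \emph{good} if, for all but $n^{1-\eta/10}$ vertices $v$, there are at least $n^{1-2\zeta}$ edges $vx\in E(G)$ forming a $C_4$ with some edge of $F$. I choose good sets per part, so that a pair can use common neighbourhoods inside one $V_i$.

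Among all Hamilton cycles containing $E_0$, pick one with a maximum number $t$ of disjoint good sets $F_1,\dots,F_t\subseteq E(\cC)\setminus E_0$. Break ties by a potential measuring how close a further small set $F_{t+1}$ is to being good, namely the number of vertices it already serves.

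Suppose $t<n^{1-3\eta/10}$. Apply \cref{lem:thomassen_new} with
\[E\coloneqq E_0\cup F_1\cup\dots\cup F_{t+1},\]
which has size at most $n^{1-\eta}+n^{1-\eta/5}$. Let the auxiliary graph consist of the edges $e\notin\cC$ whose addition to $F_{t+1}$ increases the potential by a lot.

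By pigeonhole over the $s$ parts, every vertex outside a small $B$ has large degree in this graph: as long as $F_{t+1}$ is not good, many unserved vertices lie in a common part with neighbours of $v$. Since $\sqrt n\log^2 n+3|B'|$ is much smaller than $n^{1-2\zeta}$, the lemma applies. It yields a new Hamilton cycle preserving $E$ and gaining such an edge, contradicting the maximality. So we obtain $t=n^{1-3\eta/10}$ good sets.

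\textbf{Stage 2 (many good edges).} Call an edge \emph{good} if it forms a $C_4$ with edges of at least $n^{1-4\eta/10}$ of the sets $F_i$. Averaging over the $t$ good sets shows that all but few vertices $v$ have at least $n^{1-\eta/2}$ good edges at them.

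Take a Hamilton cycle containing $E_0\cup\bigcup_i F_i$ and a maximal number of good edges. If it has fewer than $n^{1-6\eta/10}$ good edges, apply \cref{lem:thomassen_new} again. Preserve $E_0$, all $F_i$ and all current good edges; let the auxiliary graph be the good edges not on the cycle; and let $B$ be the exceptional vertices. This produces a cycle with one more good edge, a contradiction.

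Otherwise, each good edge on $\cC'$ participates in at least $n^{1-4\eta/10}$ implanted $C_4$'s with edges of $\bigcup F_i\subseteq E(\cC')$. Hence
\[e(H_{\cC'})\geq n^{1-6\eta/10}\cdot n^{1-4\eta/10}/2\geq n^{2-\eta}.\]
I will double-check here that $C_4$'s containing three edges of $\cC'$ are negligible, since these do not count as implanted. They are at most $O(n\cdot n)$ per configuration but are constrained, so a small additional loss suffices.

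\textbf{Main obstacle.} I expect the degree condition in the auxiliary graphs to be the difficult step, since the common-neighbour property only holds within parts. In Stage 1 one must phrase "closer to good" per part, so that for a typical vertex $v$ a positive fraction of its $n^{1-\eps}$ neighbours share a part with an unserved vertex. The exceptional set $B$ must absorb the vertices for which this fails while keeping $|B'|\ll n^{1-2\zeta}$. Balancing the exponents (number of sets versus their sizes versus the size of $E$) is where I would spend most care. If necessary, I would run both stages simultaneously with a single lexicographic potential, as the overview suggests.
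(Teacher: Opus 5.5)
\textbf{Gap: the Stage~1 degree claim.} This is a structural problem, not a matter of balancing exponents. \Cref{lm:part} only gives many common neighbours for pairs inside one part. So an edge $vu$ is only guaranteed to put $z$ into $M_{G,\zeta}(vu)$ when $z$ lies in the part of one endpoint and is adjacent to the other. Hence ``$F_{t+1}$ is not good'' does not imply that almost every vertex has many improving edges. If the unserved vertices are concentrated in a few parts, vertices of the other parts may have none. For example, take two cliques of order $n/2$ joined by a perfect matching, and suppose all unserved vertices lie in the second clique. Then no vertex $v$ of the first clique lies on an edge $vu$ with $M_{G,\zeta}(vu)$ meeting the second clique, so $B$ would have to contain $n/2$ vertices. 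In general, serving a part needs edges that are not on the current cycle, and nothing in your tie-break controls where the unserved vertices sit.

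Choosing good sets per part, as you hint, does not rescue a sequential scheme. You would then need a separate count $t_i$ and partial set for each part, with $t_i$ capped. Without the cap, $|V(E)|$ is not controlled, and a part left with few good sets gives its vertices no degree in Stage~2. But once some part reaches its cap while others have not, the vertices of that finished part get no degree from the Stage~1 auxiliary graph, so \cref{lem:thomassen_new} cannot be applied. You cannot finish Stage~1 everywhere before starting Stage~2.

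\textbf{The missing idea} is the simultaneous scheme you mention only as a fallback; it is the core of the paper's proof. Set $\eta'=\eta/10$. For each part $V_i$ one keeps disjoint sets $E_{i,1},\dots,E_{i,t_i}\subseteq E(\cC')$, each of size at most $n^{2\eta'}$ with $|V_i\setminus M_{G,\zeta}(E_{i,j})|\leq n^{1-\eta'}$, where $t_i\leq n^{1-3\eta'}$, plus one further set $E_{i,t_i+1}$.
\begin{itemize}
\item If $t_i<n^{1-3\eta'}$, this set satisfies $|M_{G,\zeta}(E_{i,t_i+1})\cap V_i|\geq|E_{i,t_i+1}|\,n^{1-2\eta'}$. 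This invariant, which your ``vertices served'' potential lacks, is what keeps $|E_{i,t_i+1}|<n^{2\eta'}$.
\item If $t_i=n^{1-3\eta'}$, it satisfies $e(H_{\cC'}[E_i])\geq|E_{i,t_i+1}|\,n^{1-4\eta'}$, where $E_i=E_{i,1}\cup\dots\cup E_{i,t_i+1}$. So it plays the role of your good edges, for that part only.
\end{itemize}
One maximises $\sum_it_i$ and then $\sum_i|E_{i,t_i+1}|$. Then one applies \cref{lem:thomassen_new} once with $G'=\bigcup_iG_i$. Here $G_i$ comes from \cref{lm:covervtxs} (with $S=V_i$ and $T=V_i\setminus M_{G,\zeta}(E_{i,t_i+1})$) for unfinished parts, and from \cref{lm:closeC4} (with $S=V_i$) for finished ones. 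The assumption $e(H_{\cC'})<n^{2-\eta}$ is used exactly to bound $|E_{i,t_i+1}|$ in finished parts, and hence $|V(E)|$.

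\textbf{Smaller slips.} Your Stage~2 bound of $n^{1-\eta/2}$ good edges per vertex is too weak against $3|B'|$, since $|V(E)|$ can be of order $n^{1-\eta/5}$; the averaging actually gives $n^{1-2\zeta}$. The $C_4$'s with a third cycle edge cost only $O(1)$ per edge, because that third edge is one of the two cycle edges adjacent to the given one. Even so, your final count $n^{2-\eta}/2$ falls a factor $2$ short of $n^{2-\eta}$.
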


The remainder of this section is devoted to proving \Cref{cor:stage}.
Recall from \cref{sec:sketch} that our strategy is roughly as follows.
We let $\cC$ be a Hamilton cycle which contains a maximum number of ``good sets'' and, subject to that, a maximum number of ``good edges''.
The case where $\cC$ contains sufficiently many good edges will correspond to the case where $\cC$ has many implanted $C_4$'s.
Therefore, we will suppose for a contradiction that it is not the case and we will want to use \cref{lem:thomassen_new} to find a new cycle $\cC'$ which contradicts the maximality of~$\cC$.
For its application, we need a dense graph $G'$ of edges which would ``improve'' $\cC$ if incorporated.
The next two lemmas construct such a $G'$\,---\,the case where $\cC$ fails to be good enough because it has too few good sets corresponds to \cref{lm:covervtxs}, and the case where $\cC$ has many good sets but too few good edges corresponds to \cref{lm:closeC4}.

We first need some notation, which corresponds to the notion of ``good set'' used in the proof overview.
Given $\zeta>0$, an $n$-vertex graph $G$, and an edge $xy\in E(G)$, we denote by $M_{G,\zeta}(xy)$ the set of vertices $z\in V(G)\setminus \{x,y\}$ which have at least $n^{1-\zeta}$ neighbours $w\in V(G)\setminus \{x,y\}$ such that $xy$ and $zw$ form a $C_4$ in $G$ (that is, both $yz,wx\in E(G)$ or both $yw,zx\in E(G)$).
Informally, $M_{G,\zeta}(xy)$ consists of vertices which form ``many'' $C_4$'s with the edge $xy$.
Given a set $E\subseteq E(G)$, we denote $M_{G,\zeta}(E)\coloneqq \bigcup_{e\in E}M_{G,\zeta}(e)$ (in particular, $M_{G,\zeta}(\varnothing)=\varnothing$).
The ``good sets'' $E$ in \cref{sec:sketch} correspond to sets with very small $V(G)\setminus M_{G,\zeta}(E)$.

\begin{lemma}\label{lm:covervtxs}
    Let $0<1/n\ll \zeta\leq \eta/2 < 1/4$.
    Let $G$ be an $n$-vertex graph, and suppose that there exist $T\subseteq S\subseteq V(G)$ of sizes $|S|\geq |T|\geq n^{1-\eta}$ such that for any distinct $u,v\in S$ we have that $|N_G(u)\cap N_G(v)|\geq n^{1-\zeta}+1$.
    Then, there exists some $G'\subseteq G$ which satisfies the following properties:
    \begin{enumerate}[label=\rm(\roman*)]
        \item\label{lm:covervtxs-deg} For each $v\in S$, we have that $d_{G'}(v)\geq n^{1-2\zeta}$.
        \item\label{lm:covervtxs-manyC4} For each $e\in E(G')$, we have that $|M_{G,\zeta}(e)\cap T|\geq n^{1-2\eta}$.
    \end{enumerate}
\end{lemma}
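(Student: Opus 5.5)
Let $G'\subseteq G$ be the spanning subgraph whose edges are exactly the edges $e\in E(G)$ with $|M_{G,\zeta}(e)\cap T|\geq n^{1-2\eta}$. Then \ref{lm:covervtxs-manyC4} holds by construction, and everything reduces to the degree condition \ref{lm:covervtxs-deg}. To prove it, we fix $v\in S$ and show that many edges $vx$ at $v$ satisfy $|M_{G,\zeta}(vx)\cap T|\geq n^{1-2\eta}$. We do this by double counting.

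\emph{Key observation.} Let $z\in T\setminus\{v\}$ and $x\in N_G(v)\cap N_G(z)$. Then $z\in M_{G,\zeta}(vx)$.

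To see this, first note that $z\neq v$, and $z\neq x$ since $x\in N_G(z)$. Now apply the definition with the edge $vx$ (so $v$ plays the role of the first endpoint and $x$ the second). For every $w\in N_G(v)\cap N_G(z)$ with $w\notin\{v,x\}$, we have $xz,wv\in E(G)$. Hence $vx$ and $zw$ form the $C_4$ given by $v,x,z,w$.

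Since $v,z\in S$ are distinct, we have $|N_G(v)\cap N_G(z)|\geq n^{1-\zeta}+1$. Moreover, $v\notin N_G(v)$, so removing $x$ still leaves at least $n^{1-\zeta}$ valid choices of $w$. This proves the observation.

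\emph{Double counting.} Count the pairs $(x,z)$ with $x\in N_G(v)$, $z\in T\setminus\{v\}$ and $x\in N_G(z)$. By the assumption on common neighbourhoods, there are at least $(|T|-1)n^{1-\zeta}\geq |T|n^{1-\zeta}/2$ such pairs.

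Let $X\coloneqq\{x\in N_G(v): |M_{G,\zeta}(vx)\cap T|\geq n^{1-2\eta}\}$; these are exactly the $G'$-neighbours of $v$. By the observation, each $x\in N_G(v)$ appears in at most $|M_{G,\zeta}(vx)\cap T|$ pairs. This is at most $|T|$ if $x\in X$, and less than $n^{1-2\eta}$ otherwise. Therefore
\[
\frac{|T|n^{1-\zeta}}{2}\leq |X||T|+n\cdot n^{1-2\eta}.
\]

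\emph{Conclusion.} Since $|T|\geq n^{1-\eta}$, we have $n^{2-2\eta}\leq |T|n^{1-\eta}$. Since $\zeta\leq\eta/2<\eta$ and $n$ is large, this gives $n^{2-2\eta}\leq |T|n^{1-\zeta}/4$. Substituting and dividing by $|T|$ yields
\[
d_{G'}(v)=|X|\geq \frac{n^{1-\zeta}}{4}\geq n^{1-2\zeta},
\]
which is \ref{lm:covervtxs-deg}.

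The only step needing care is the key observation. One must match the orientation convention in the definition of ``form a $C_4$'' correctly and check that $v$ and $x$ are excluded from the admissible choices of $w$; this is where the ``$+1$'' in the hypothesis is used. The rest is routine.
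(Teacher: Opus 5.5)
Your proof is correct and follows essentially the same route as the paper. Both arguments double-count $2$-walks from $v$ into $T$, and both rest on the same key observation: every $z\in (N_G(x)\cap T)\setminus\{v\}$ lies in $M_{G,\zeta}(vx)$. The only difference is in how $G'$ is defined. The paper filters neighbours $u$ of $v$ by the proxy condition $|N_G(u)\cap T|\geq |T|^{1-2\zeta}$ and verifies (ii) afterwards. You take $G'$ to be all edges satisfying (ii), which makes (ii) automatic and reduces the exponent bookkeeping to needing only $\zeta<\eta$.
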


\begin{proof}
    For each $v\in S$, let $N_v\coloneqq\{u\in N_G(v):|N_G(u)\cap T|\geq |T|^{1-2\zeta}\}$.
    We claim that the graph~$G'$ on~$V(G)$ defined by $E(G')\coloneqq \bigcup_{v\in S}\{vu: u\in N_v\}$ satisfies the desired properties.
    
    Indeed, for $v\in S$, we have that
    \[\sum_{w\in T}|N_G(w)\cap N_G(v)|=\sum_{u\in N_G(v)}|N_G(u)\cap T|\]
    (this holds since both expressions count the number of walks of length~$2$ in~$G$ with one endpoint in~$v$ and the other in~$T$).\COMMENT{Indeed, we consider \emph{walks} rather than \emph{paths} for we must account for the case that $v\in T$.
    With this double-counting idea in mind, observe that the LHS counts such walks from the endpoint in $T$, whereas the RHS counts the walks from the middle point, which must be a neighbour of $v$.}
    It then follows that\COMMENT{The first inequality is immediate by the assumption on the common neighbourhoods of vertices in $S$.
    The next inequality uses the fact that vertices $u\in N_v$ have at most $|T|$ neighbours within $T$, so this terms cancels with the denominator, whereas vertices not in $N_v$ have at most $|T|^{1-2\zeta}$ such neighbours by definition.
    The last inequality simply uses the definition of $G'$ (and is an inequality because, a priori, it could be that some vertex $v\in S$ belongs to $N_w$ for some other $w\in S$ for which $w\notin N_v$).}
    \[n^{1-\zeta}\leq \frac{\sum_{w\in T}|N_G(w)\cap N_G(v)|}{|T|}= \frac{\sum_{u\in N_G(v)}|N_G(u)\cap T|}{|T|}\leq |N_v|+n|T|^{-2\zeta}\leq d_{G'}(v) +n|T|^{-2\zeta},\]
    and so $d_{G'}(v)\geq n^{1-\zeta}-n|T|^{-2\zeta}\geq n^{1-\zeta}-n^{1-2\zeta(1-\eta)}\geq n^{1-2\zeta}$\COMMENT{Indeed, note that $n|T|^{-2\zeta}\leq n^{1-2\zeta(1-\eta)}=n^{1-2\zeta+2\zeta\eta}$ by the lower bound on $|T|$, where this exponent satisfies that $1-2\zeta+2\zeta\eta<1-\zeta$ by the bound on $\eta$, and thus for sufficiently large $n$ we have that $n^{1-\zeta}-n|T|^{-2\zeta}\geq n^{1-\zeta}/2\geq n^{1-2\zeta}$.} and \ref{lm:covervtxs-deg} holds.
    
    Moreover, fix any $uv\in E(G')$, and suppose that $v\in S$ and $u\in N_v$. 
    Note that $M_{G,\zeta}(uv)$ contains all neighbours $z$ of $u$ (other than $v$) such that $|N_G(v)\cap N_G(z)|\geq n^{1-\zeta}+1$.\COMMENT{Here, with the $+1$ we account for $u$ itself, which does not count towards constructing the desired copies of $C_4$.}
    Since $T\subseteq S$, any $z\in N_G(u)\cap T$ has at least $n^{1-\zeta}+1$ common neighbours with $v$ by assumption, and thus they all lie in $M_{G,\zeta}(uv)$.
    Therefore,\COMMENT{For the first inequality, note that the $-1$ comes from the possibility that $v\in T$.
    The second inequality holds by the definition of $N_v$ (recall that $u\in N_v$); the rest are simple calculations (but note that, for the third, we need that $1-2\zeta\geq0$, as otherwise the inequality would revert, hence an upper bound on $\zeta$, and for the last we need that $(1-\eta)(1-2\zeta)=1-\eta-2\zeta+2\eta\zeta>1-2\eta\iff(1+2\zeta)\eta>2\zeta\iff\eta>\frac{2\zeta}{1+2\zeta}$).}
    \[|M_{G,\zeta}(uv)\cap T| \geq |N_G(u)\cap T|-1 \geq |T|^{1-2\zeta}-1\geq n^{(1-\eta)(1-2\zeta)}-1\geq n^{1-2\eta}\]
    and \ref{lm:covervtxs-manyC4} holds, as desired.
\end{proof}

\begin{lemma}\label{lm:closeC4}
    Let $0< 1/n \ll \zeta < \eta < 1/4$.
    Let $G$ be an $n$-vertex graph and $S\subseteq V(G)$.
    Suppose that there exist disjoint $E_1, \dots, E_{n^{1-3\eta}}\subseteq E(G)$ such that $|S\setminus M_{G,\zeta}(E_i)|\leq n^{1-\eta}$ for each $i\in [n^{1-3\eta}]$.
    Then, there exist $G'\subseteq G$ and $B\subseteq S$ with the following properties:
    \begin{enumerate}[label=\rm(\roman*)]
        \item $|B|\leq 2n^{1-\eta}$.\label{lm:closeC4-B}
        \item For each $v\in S\setminus B$, we have that $d_{G'}(v)\geq n^{1-2\zeta}$.\label{lm:closeC4-deg}
        \item For each $e\in E(G')$, there are at least $n^{1-4\eta}$ edges $e'\in\bigcup_{i\in [\ell]}E_i$ such that $e$ and $e'$ form a $C_4$ in~$G$.\label{lm:closeC4-manyC4}
    \end{enumerate}
\end{lemma}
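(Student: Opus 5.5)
The plan is to read off $B$ from the sizes of the sets $S\setminus M_{G,\zeta}(E_i)$ by averaging, and to define $G'$ directly as the set of ``good'' edges. I interpret the index $\ell$ in \ref{lm:closeC4-manyC4} as $t\coloneqq n^{1-3\eta}$, so the union there is $\bigcup_{i\in[t]}E_i$.

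\textbf{Choice of $B$.} Let $B$ be the set of $v\in S$ with $v\notin M_{G,\zeta}(E_i)$ for at least $t/2$ indices $i\in[t]$. Double counting pairs $(v,i)$ with $v\in S\setminus M_{G,\zeta}(E_i)$ gives
\[
|B|\cdot t/2\leq \sum_{i\in[t]}|S\setminus M_{G,\zeta}(E_i)|\leq t\,n^{1-\eta}.
\]
Hence $|B|\leq 2n^{1-\eta}$, which is \ref{lm:closeC4-B}.

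\textbf{Choice of $G'$.} Let $E(G')$ consist of all edges $vw\in E(G)$ with $v\in S\setminus B$ such that at least $n^{1-4\eta}$ edges $e'\in\bigcup_{i\in[t]}E_i$ form a $C_4$ with $vw$. Property \ref{lm:closeC4-manyC4} then holds by definition, so only \ref{lm:closeC4-deg} needs an argument.

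\textbf{Degree bound.} Fix $v\in S\setminus B$. Then $v\in M_{G,\zeta}(E_i)$ for more than $t/2$ indices $i$. For each such $i$, pick $e_i=xy\in E_i$ with $v\in M_{G,\zeta}(e_i)$. By definition, $v$ has at least $n^{1-\zeta}$ neighbours $w\notin\{x,y\}$ such that $vw$ and $e_i$ form a $C_4$.

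Count pairs $(w,i)$ with $w\in N_G(v)$ and $vw$ forming a $C_4$ with $e_i$. There are at least $(t/2)n^{1-\zeta}$ such pairs. Since the $E_i$ are disjoint, the edges $e_i$ are distinct. So if $w$ occurs in at least $n^{1-4\eta}$ pairs, then $vw$ forms a $C_4$ with at least $n^{1-4\eta}$ distinct edges of $\bigcup_i E_i$, and $vw\in E(G')$.

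Let $N$ be the number of such $w$. Each such $w$ contributes at most $t$ pairs, and every other $w$ contributes fewer than $n^{1-4\eta}$ pairs. Therefore
\[
\frac{t}{2}\,n^{1-\zeta}\leq Nt+n\cdot n^{1-4\eta},
\]
so
\[
N\geq \tfrac12 n^{1-\zeta}-\frac{n^{2-4\eta}}{t}=\tfrac12 n^{1-\zeta}-n^{1-\eta}.
\]
Since $\zeta<\eta$ and $n$ is large, this is at least $n^{1-2\zeta}$, giving \ref{lm:closeC4-deg}.

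\textbf{Main obstacle.} The only delicate point is balancing the exponents in the last averaging step. The choice $t=n^{1-3\eta}$ and the threshold $n^{1-4\eta}$ make the error term exactly $n^{1-\eta}$, which is negligible against $n^{1-\zeta}$ because $\zeta<\eta$.
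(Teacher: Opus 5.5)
Your proof is correct and matches the paper's argument: you use the same double-counting definition of $B$, define $G'$ as the edges at vertices of $S\setminus B$ that form $C_4$'s with many edges of $\bigcup_i E_i$, and run the same averaging to get $d_{G'}(v)\geq \tfrac12 n^{1-\zeta}-n^{1-\eta}$. The only difference is cosmetic: you threshold on the number of edges and fix one representative $e_i$ per index, whereas the paper thresholds on the number of indices; by disjointness of the $E_i$ the two are equivalent. You also read the stray $\ell$ in (iii) correctly as $n^{1-3\eta}$.
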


\begin{proof}
    Let $B$ be the set of vertices $v\in S$ for which there exist at least ${n^{1-3\eta}}/{2}$ indices $i\in [n^{1-3\eta}]$ such that $v\notin M_{G,\zeta}(E_i)$.
    If we assume that $|B|>2n^{1-\eta}$, then
    \[n^{2-4\eta}<|B|\frac{n^{1-3\eta}}{2}\leq\left\lvert\{(v,i)\in S\times[n^{1-3\eta}]:v\notin M_{G,\zeta}(E_i)\}\right\rvert=\sum_{i\in[n^{1-3\eta}]}|S\setminus M_{G,\zeta}(E_i)|\leq n^{2-4\eta},\]
    a contradiction.
    So $|B|\leq 2n^{1-\eta}$,
    as desired for \cref{lm:closeC4-B}.

    For each $v\in S\setminus B$ and $i\in [n^{1-3\eta}]$, let $N_{v,i}$ be the set of vertices $u\in N_G(v)$ such that there exists some $e\in E_i$ which forms a $C_4$ with $uv$ in $G$.
    For each $v\in S\setminus B$, let $N_v$ denote the set of vertices $u\in N_G(v)$ for which there exist at least $n^{1-4\eta}$ indices $i\in [n^{1-3\eta}]$ such that $u\in N_{v,i}$.
    We claim that the graph $G'$ on $V(G)$ defined by $E(G')\coloneqq \bigcup_{v\in S\setminus B}\{uv: u\in N_v\}$ satisfies the desired properties.
    
    Indeed, \cref{lm:closeC4-manyC4} holds by construction.\COMMENT{This uses crucially the assumption that the edge-sets $E_i$ are disjoint, as otherwise it could be that the edge coming from different indices is always the same.}
    For \cref{lm:closeC4-deg}, observe first that, by the definition of $M_{G,\zeta}(\cdot)$, for $v\in S\setminus B$ and $i\in [n^{1-3\eta}]$, if $v\in M_{G,\zeta}(E_i)$, then $|N_{v,i}|\geq n^{1-\zeta}$.
    Therefore, for all $v\in S\setminus B$ we have that
    \[\sum_{i\in [n^{1-3\eta}]}|N_{v,i}|\geq\frac{n^{1-3\eta}}{2}\cdot n^{1-\zeta}.\]
    On the other hand, by the definition of $N_v$, when considering the same sum as above, each $u\notin N_v$ is counted at most $n^{1-4\eta}$ times.
    Therefore,
    \[\sum_{i\in [n^{1-3\eta}]}|N_{v,i}|\leq |N_v|n^{1-3\eta}+\left|\bigcup_{i\in [n^{1-3\eta}]}N_{v,i}\right| n^{1-4\eta} \leq d_{G'}(v)n^{1-3\eta}+n^{2-4\eta}.\]
    These two inequalities imply that $d_{G'}(v)\geq {n^{1-\zeta}}/{2}-n^{1-\eta}\geq n^{1-2\zeta}$, as desired.
\end{proof}

We are now ready to prove \cref{cor:stage}.

\begin{proof}[Proof of \cref{cor:stage}]
    Let $0<1/n\ll\eps\ll\zeta\ll \eta \leq 1$\COMMENT{One can track dependencies here. Crude bound: $\zeta< \eta/60$ would suffice.} and $\eta'\coloneqq \eta/10$.
    Apply \cref{lm:part} with $m=2$ to obtain a partition $V(G)=V_1\cup \dots \cup V_s$ of $V(G)$ into $s\leq n^\zeta$ parts such, for all $i\in [s]$, any two vertices $u,v\in V_i$ satisfy that $|N_G(u)\cap N_G(v)|\geq n^{1-\zeta}+1$.

    Given a Hamilton cycle $\cC_0\subseteq G$ and $i\in [s]$, let $t_i(\cC_0)$ be the largest integer in $[0,n^{1-3\eta'}]$ such that there exist (possibly empty) edge sets $E_{i,1}, \dots, E_{i,t_i(\cC_0)+1}$ such that,  denoting $E_i\coloneqq E_{i,1}\cup \dots\cup E_{i,t_i(\cC_0)+1}$, the following properties hold:
    \begin{enumerate}[label=\rm(\roman*)]
        \item $E_i\subseteq E(\cC_0)$.\label{cor:stage-C}
        \item For all distinct $j,k\in[t_i(\cC_0)+1]$ we have that $E_{i,j}\cap E_{i,k}=\varnothing$.\label{cor:stage-disjoint}
        \item For all $j\in [t_i(\cC_0)+1]$, if $j\in[n^{1-3\eta'}]$, then $|E_{i,j}|\leq n^{2\eta'}$, and if $j= n^{1-3\eta'}+1$, then $|E_{i,j}|\leq n^{1-6\eta'}$.\label{cor:stage-size}
        \item For all $j\in [t_i(\cC_0)]$ we have that $|V_i\setminus M_{G,\zeta}(E_{i,j})|\leq n^{1-\eta'}$.\label{cor:stage-Mfull}
        \item If $t_i(\cC_0)+1\leq n^{1-3\eta'}$, then 
        \[|V_i\setminus M_{G,\zeta}(E_{i,t_i(\cC_0)+1})|>n^{1-\eta'}\quad\text{and}\quad|M_{G,\zeta}(E_{i,t_i(\cC_0)+1})\cap V_i|\geq |E_{i,t_i(\cC_0)+1}|\cdot n^{1-2\eta'}.\]
        In particular, $|E_{i,t_i(\cC_0)+1}|<n^{2\eta'}$.\COMMENT{Assume not. From the second inequality above, it must be the case that $|M_{G,\zeta}(E_{i,t_i(\cC_0)+1})\cap V_i|\geq |E_{i,t_i(\cC_0)+1}|\cdot n^{1-2\eta'}=n$, but this contradicts the first inequality above.}\label{cor:stage-Mpartial}
        \item If $t_i(\cC_0)= n^{1-3\eta'}$, then $e(H_{\cC_0}[E_i])\geq |E_{i,t_i(\cC_0)+1}|\cdot n^{1-4\eta'}$.
        In particular, if $e(H_{\cC_0})< n^{2-10\eta'}$, then this implies that $|E_{i,t_i(\cC_0)+1}|<n^{1-6\eta'}$.\label{cor:stage-C4}
    \end{enumerate}
    (Note that $t_i(\cC_0)$ is well-defined since, if $|V_i|>n^{1-\eta'}$, then the above properties hold with $0$ and $\varnothing$ playing the roles of $t_i(\cC_0)$ and $E_{i,t_i(\cC_0)+1}$, and if $|V_i|\leq n^{1-\eta'}$, then $t_i(\cC_0)=n^{1-3\eta'}$ and the properties hold with $\varnothing$ playing the role of $E_{i,j}$ for each $j\in [t_i(\cC_0)+1]$.) 
    Moreover, let $m_i(\cC_0)$ be the largest integer such that there exist sets $E_{i,1}, \dots, E_{i,t_i(\cC_0)+1}$ satisfying the above properties and such that $|E_{i,t_i(\cC_0)+1}|=m_i(\cC_0)$.

    Let $\cC'\subseteq G$ be a Hamilton cycle with $E_0\subseteq E(\cC')$ which maximises $\sum_{i\in [s]}t_i(\cC')$ and, subject to this, maximises $\sum_{i\in [s]}m_i(\cC')$.
    Suppose for a contradiction that $e(H_{\cC'})< n^{2-10\eta'}$.
    We will use \cref{lem:thomassen_new} to prove the existence of a new Hamilton cycle $\cC''$ which contradicts the maximality $\cC'$.

    For each $i\in [s]$, denote $t_i\coloneqq t_i(\cC')$ and $m_i\coloneqq m_i(\cC')$, and fix sets $E_{i,1}, \dots, E_{i,t_i+1}$ such that properties \cref{cor:stage-C,cor:stage-disjoint,cor:stage-Mfull,cor:stage-C4,cor:stage-Mpartial,cor:stage-size} hold and $|E_{i,t_i+1}|=m_i$.
    Denote $E\coloneqq\bigcup_{i=0}^sE_i$.
    By relabelling if necessary, we may assume that $t_1\leq \ldots \leq t_s$.
    Let $r\in[0,s]$ be the largest integer such that $t_i+1\leq n^{1-3\eta'}$ for all $i\in [r]$.

    Note that, by \cref{cor:stage-size} and the assumption from the statement, we have that\COMMENT{We have
    \[|V(E)|\leq 2n^{1-10\eta'}+2sn^{1-3\eta'}n^{2\eta'}+2sn^{1-6\eta'}\leq2(n^{1-10\eta'}+n^{1-\eta'+\zeta}+n^{1-6\eta'+\zeta})\leq n^{1-\eta'/2},\]
    where for the last inequality it suffices that $\zeta<\eta'/2$ (for sufficiently large $n$).}
    \begin{equation}\label{equa:Bbound1}
        |V(E)|\leq 2n^{1-10\eta'}+2sn^{1-3\eta'}n^{2\eta'}+2sn^{1-6\eta'}\leq n^{1-\eta'/2}.
    \end{equation}
    For each $i\in [r]$, recall from \cref{cor:stage-Mpartial} that $|V_i\setminus M_{G,\zeta}(E_{i,t_i+1})|>n^{1-\eta'}$, so we may let $G_i\subseteq G$ be the graph obtained by applying \cref{lm:covervtxs} with $\eta'$, $V_i$, and $V_i\setminus M_{G,\zeta}(E_{i,t_i+1})$ playing the roles of $\eta$, $S$, and~$T$.
    For each $i\in [s]\setminus [r]$, by \cref{cor:stage-Mfull}, we may let $G_i\subseteq G$ and $B_i\subseteq V_i$ be the graph and vertex set obtained by applying \cref{lm:closeC4} with $\eta'$, $V_i$, and $E_{i,j}$ playing the roles of $\eta$, $S$, and~$E_j$, respectively.
    Let $G'\coloneqq \bigcup_{i\in [s]} G_i$ and \mbox{$B'\coloneqq V(E)\cup \bigcup_{i\in [s]\setminus [r]}B_i$}.
    Now, by~\eqref{equa:Bbound1} and \cref{lm:closeC4}~\ref{lm:closeC4-B}, we have that\COMMENT{Here we need again that $\zeta<2\eta'/3$, say.}
    \[|B'|\leq n^{1-\eta'/2}+2sn^{1-\eta'}\leq n^{1-\eta'/3}.\]
    Moreover, by \cref{lm:covervtxs}~\ref{lm:covervtxs-deg} and \cref{lm:closeC4}~\ref{lm:closeC4-deg}, for every vertex \mbox{$v\in V(G)\setminus B'$} we have that\COMMENT{For the last inequality, for sufficiently large $n$, it suffices to verify that $1-2\zeta>1/2$, for which it suffices that $\zeta<1/4$ (which holds), and that $1-2\zeta>1-\eta'/3$, for which we need that $\zeta<\eta'/6$.}
    \[d_{G'}(v)\geq n^{1-2\zeta}\geq \sqrt{n}\log^2n+3|B'|+2.\]
    We are thus in a position where we may apply \cref{lem:thomassen_new} using $E$ as the set of ``protected'' edges and~$E(G')$ as the set of ``good'' edges which we may incorporate into a new Hamilton cycle.
    Let $\cC''$ be the Hamilton cycle obtained by applying \cref{lem:thomassen_new} with $G'$, $\cC'$, and~$\bigcup_{i\in [s]\setminus [r]}B_i$ playing the roles of~$G$, $\cC$, and~$B$, respectively.
    We claim that $\cC''$ contradicts the maximality of $\cC'$.
    The rest of the proof is devoted to showing this.
    
    By \cref{lem:thomassen_new}, $E\subseteq V(\cC'')$ and so, for each $i\in[s]$, the sets $E_{i,1}, \dots, E_{i, t_i+1}$ satisfy properties \cref{cor:stage-C,cor:stage-disjoint,cor:stage-Mfull,cor:stage-C4,cor:stage-Mpartial,cor:stage-size} with $\cC''$ playing the role of~$\cC_0$ in \cref{cor:stage-C}, $H_{\cC''}$ playing the role of~$H_{\cC_0}$ in~\ref{cor:stage-C4}, and $t_i$ instead of~$t_i(\cC'')$ everywhere.
    That is, they witness that $t_i(\cC'')\geq t_i$ and that, if $t_i(\cC'')= t_i$, then $m_i(\cC'')\geq m_i$.
    Thus, to reach the desired contradiction it suffices to find $i\in [s]$ such that $t_i(\cC'')> t_i$ or $m_i(\cC'')> m_i$.

    Let $e\in E(\cC'')\setminus E(\cC')$ and $i\in [s]$ such that $e\in E(G_i)$.
    Define $E_{i,t_i+1}'\coloneqq E_{i,t_i+1}\cup \{e\}$.
    By \cref{lem:thomassen_new} and property \cref{cor:stage-C}, we have that $E_{i,1}\cup \dots \cup E_{i,t_i}\cup E_{i,t_i+1}'\subseteq E(\cC'')$.
    Moreover, properties~\cref{cor:stage-disjoint,cor:stage-C} imply that $E_{i,t_i+1}'\cap E_{i,j}=\varnothing$ for all $j\in [t_i]$.
    By assumption and the ``in particular'' parts of \cref{cor:stage-Mpartial,cor:stage-C4}, we have that $|E_{i,t_i+1}'|\leq n^{2\eta'}$ if $t_i+1\leq n^{1-3\eta'}$ and $|E_{i,t_i+1}'|\leq n^{1-6\eta'}$ if $t_i= n^{1-3\eta'}$.
    If $t_i= n^{1-3\eta'}$ (and so $t_i(\cC'')=t_i$), recalling that $E_i\subseteq E(\cC')\cap E(\cC'')$ by \cref{lem:thomassen_new}, we have by \cref{lm:closeC4}~\ref{lm:closeC4-manyC4} and property~\cref{cor:stage-C4} that
    \[e(H_{\cC''}[E_i\cup\{e\}])\geq e(H_{\cC'}[E_i])+n^{1-4\eta'}\geq |E_{i,t_i+1}'|n^{1-4\eta'},\]
    and so $E_{i,1}, \dots, E_{i, t_i},E_{i, t_i+1}'$ witness that $m_i(\cC'')\geq m_i+1$, a contradiction.
    We may therefore assume that $t_i+1\leq n^{1-3\eta'}$.
    If $|V_i\setminus M_{G,\zeta}(E_{i,t_i+1}')|\leq n^{1-\eta'}$, then $E_{i,1}, \dots, E_{i, t_i},E_{i, t_i+1}',\varnothing$ witness that $t_i(\cC'')\geq t_i+1$, a new contradiction.
    Hence, we may assume that $|V_i\setminus M_{G,\zeta}(E_{i,t_i+1}')|> n^{1-\eta'}$.
    But in this case, by \cref{lm:covervtxs}~\ref{lm:covervtxs-manyC4} (which we applied with $T=V_i\setminus M_{G,\zeta}(E_{i,t_i+1})$) and \cref{cor:stage-Mpartial}, we have that
    \[|M_{G,\zeta}(E_{i,t_i+1}')\cap V_i|\geq |M_{G,\zeta}(E_{i,t_i+1})\cap V_i|+n^{1-2\eta'}\geq |E_{i,t_i+1}'|\cdot n^{1-2\eta'}.\]
    Thus, $E_{i,1}, \dots, E_{i, t_i},E_{i, t_i+1}'$ witness that, if $t_i(\cC'')=t_i$, then $m_i(\cC'')\geq m_i+1$, a final contradiction.\COMMENT{Here we have that $E_{i, t_i+1}'$, of size $m_i+1$, satisfies \cref{cor:stage-Mpartial}.
    However, note that this does not necessarily imply that $m_i(\cC'')\geq m_i+1$.}
    This concludes the proof.
\end{proof}

\subsection{\texorpdfstring{$C_4$}{C4}-switches}\label{sec:switches}

We show here that any Hamilton cycle $\cC$ with many implanted $C_4$'s can be transformed into a $2$-factor which consists of a given number of cycles.

\begin{lemma}\label{lm:switch}
    Let $0< 1/n \ll \eta \ll \delta \leq 1$\COMMENT{$\eta\leq \delta^2/16$}.
    Let $G$ be an $n$-vertex graph which contains a $2$-factor~$\cC$ consisting of exactly $\ell\in[n^{1-\delta}]$ cycles.
    If $e(H_{\cC})\geq n^{2-\eta}$ and $k \in [\ell, n^{1-\delta}]$ is an integer, then~$G$ contains a $2$-factor consisting of exactly\/ $k$ cycles.
\end{lemma}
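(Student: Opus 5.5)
A natural plan is to increase the number of cycles one step at a time, working inside a single long cycle of $\cC$ that carries many implanted $C_4$'s. We then use the two drawing lemmas (\cref{lm:crossing,lm:noncrossing}) to locate a configuration whose switch raises the number of components by exactly one.

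First, we localise. Since $\cC$ has $\ell\le n^{1-\delta}$ cycles and $e(H_{\cC})\ge n^{2-\eta}$, we discard implanted $C_4$'s whose two $\cC$-edges lie in different cycles. Such a $C_4$ merges two cycles, so it is not directly useful, although it could also be exploited. The reduction goes as follows. If many implanted $C_4$'s join different cycles, those are unhelpful; but a graph on $\ell$ cycles with at least $n^{2-\eta}$ edges has some cycle $C^*$ of length $n_1$ receiving at least $n^{2-\eta}/\ell^2$ internal edges of $H_\cC$, at least after a pigeonhole argument weighted by lengths. To guarantee enough internal ones, I would pick $C^*$ maximising $e(H_\cC[E(C^*)])/|C^*|$. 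If internal edges are too few overall, I would instead merge cycles via cross-cycle switches, which only decreases the count. Since we need to increase the count from $\ell$ to $k$, I expect that one can argue that internal edges are at least $n^{2-2\eta}$ for some cycle of length at least $n^{1-2\eta}$. This is the step whose details I am least sure about.

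Now fix such a cycle $C^*$ and draw its vertices as points in convex position in cyclic order, with all chords as straight segments. Each implanted $C_4$ with $\cC$-edges $e,e'$ in $C^*$ corresponds to a pair of chords, which are either non-crossing (type A: the switch splits $C^*$ into two cycles) or crossing (type B: the switch yields another Hamilton cycle of $V(C^*)$). Consider the auxiliary graph $H$ on $E(C^*)$ whose edges are the implanted $C_4$'s, and represent each edge $ee'$ of $H$ by a point-pair drawing: place a point at the midpoint of each cycle edge on a circle, and join $e$ to $e'$ by a segment. The crossing or disjointness of two such segments then records whether the corresponding $C_4$'s interleave along $C^*$. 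Since $H$ has $N\approx n_1$ vertices and far more than $cN$ edges, every subgraph on any vertex subset with many edges contains three pairwise crossing and three pairwise disconnected segments, by \cref{lm:crossing,lm:noncrossing}. Iterating, that is, repeatedly deleting vertices and using the abundance of edges, gives large families. The key combinatorial claim, extending \cref{fig:non,fig:two}, is this: $t$ pairwise disconnected, vertex-disjoint type-A switches along the circle split $C^*$ into exactly $t+1$ cycles, and two mutually crossing type-B switches split it into $2$; combining these gives any number $t+1$ of cycles with at most $3t$ $C_4$'s. We need $t=k-\ell\le n^{1-\delta}$ disjoint configurations. These can be found greedily: each chosen $C_4$ kills at most $O(n_1)$ edges of $H$ (those incident to its $4$ endpoints' cycle edges), and since $n^{1-\delta}\cdot n\ll n^{2-2\eta}$ as $\eta\ll\delta$, enough edges remain at every step to apply the drawing lemmas again.

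The main obstacle is choosing a structured pattern so that the count of cycles after performing many switches at once is exactly controlled. I would first try to obtain, by repeated use of \cref{lm:noncrossing} (more precisely, its Erd\H{o}s–Szekeres-type iteration producing many pairwise disconnected segments), $t$ nested or sequential non-crossing type-A chords-pairs. If type A is scarce, so that most $C_4$'s are type B, I would use \cref{lm:crossing} to get triples of pairwise crossing type-B pairs, where any two crossing ones give one extra cycle, and place these gadgets in disjoint arcs of $C^*$ so that their effects add. Either way, the switches are supported on disjoint arcs and hence act independently, giving exactly $\ell+t=k$ cycles.
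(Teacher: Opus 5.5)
Your proposal has a genuine gap, and it is at the step you flag: the localisation to a single cycle with many internal implanted $C_4$'s is false, so you cannot discard cross-cycle $C_4$'s. Take $V(G)=A\cup B$ with $|A|=|B|=n/2$, let $G[A]$ and $G[B]$ be cycles $C_A$ and $C_B$, and add all edges between $A$ and $B$. For $\cC=C_A\cup C_B$, every edge of $C_A$ is adjacent in $H_{\cC}$ to every edge of $C_B$, so $e(H_{\cC})=n^2/4$. Yet no implanted $C_4$ has both $\cC$-edges in the same cycle: its other two edges would lie inside $A$ or inside $B$, hence in $\cC$. So for $k=3$ you must work across cycles, and this is the idea the paper relies on. For two distinct cycles $C\neq C'$, it takes three cross $C_4$'s of one of two kinds: with ``parallel'' chords that are pairwise disconnected (via \cref{lm:noncrossing}), or with ``antiparallel'' chords that are pairwise crossing (via \cref{lm:crossing}). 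It then checks that switching all three turns $C\cup C'$ into three cycles. Merging cycles first, which you mention and set aside, would make cross $C_4$'s internal, but the second problem below would remain.

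Second, the simultaneous-switch step is unsupported. \Cref{lm:crossing,lm:noncrossing} only produce configurations of three edges. Re-applying them after deleting used vertices gives no control over how a new configuration interacts with earlier ones: for instance, two type-A switches whose segments cross cancel each other and leave a single cycle. For type A you would need $t$ pairwise non-crossing segments, which neither greedy selection nor repeated use of \cref{lm:noncrossing} provides. For type B, gadgets in pairwise disjoint arcs need not exist. Suppose essentially every implanted $C_4$ joins an edge in one half of $C^*$ to an edge in the other half. For example, adding to $C^*=v_1\dots v_n$ the chords $v_av_b$ with $a\le n/2<b$ and $b-a\equiv 0\pmod 3$ gives about $n^2/12$ such $C_4$'s, all of type B. Then every arc containing a gadget contains one of the two junction points, so at most two gadgets fit in disjoint arcs. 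A one-shot argument would need extremal results for convex geometric graphs that are not in the paper, such as Kupitz's theorem on many pairwise disjoint edges or Capoyleas--Pach on many pairwise crossing edges. It would also need a separate count of the cycles such families produce.

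The paper sidesteps all this by induction on $k$, adding one cycle per step:
\begin{enumerate}
\item It maintains $|E(\cC_{k-1})\triangle E(\cC)|\le 12(k-1)$, so almost all of $H_{\cC}$ stays usable.
\item \Cref{claim:2cycles}, via pigeonhole and convexity of $x\mapsto x^{1+2\eta}$, finds cycles $C,C'$ of $\cC_{k-1}$ (possibly equal) with $X\subseteq E(C)$, $Y=E(C')$ and $\sum_{e\in X}|N_{H_{\cC}}(e)\cap Y|\ge 4|X\cup Y|^{1+\eta}$. This density relative to $|X\cup Y|$ is exactly what the drawing lemmas need.
\item A configuration of at most three $C_4$'s then raises the number of cycles by one.
\end{enumerate}
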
 

\begin{proof}
    We proceed by induction on $k$.
    The result holds trivially for $k=\ell$.
    Let $k>\ell$ and suppose that~$\cC_{k-1}$ is a $2$-factor of $G$ consisting of exactly $k-1$ cycles such that $|E(\cC_{k-1}) \triangle E(\cC)|\leq 12(k-1)$.

    \begin{claim}\label{claim:2cycles}
        There exist (not necessarily distinct) cycles $C$ and $C'$ in $\cC_{k-1}$ and sets $X\subseteq E(C)$ and $Y\subseteq E(C')$, each of size at least $n^{\delta/2}$, such that $\sum_{e\in X}|N_{H_{\cC}}(e)\cap Y|\geq 4|X\cup Y|^{1+\eta}$.
    \end{claim}

    \begin{claimproof}
        Since $e(H_{\cC})\geq n^{2-\eta}$, there exists a set $S\subseteq V(H_{\cC})$ of at least $n^{1-\sqrt{\eta}}$ vertices of $H_{\cC}$ which have degree at least $n^{1-\sqrt{\eta}}$ in $H_{\cC}$.\COMMENT{Otherwise $e(H_{\cC})\leq n\cdot n^{1-\sqrt{\eta}}+n^{1-\sqrt{\eta}}\cdot n= 2n^{2-\sqrt{\eta}}<n^{2-\eta}$.}
        By the pigeonhole principle and the inductive hypothesis, there is a cycle~$C$ in~$\cC_{k-1}$ with $|S\cap E(C)|\geq n^{\delta/2}$.\COMMENT{Given that $S\subseteq E(\cC)$ and $|S|\geq n^{1-\sqrt{\eta}}$, by the inductive hypothesis we have that 
        \[|S\cap E(\cC_{k-1})|\geq n^{1-\sqrt{\eta}}-12(k-1)\geq n^{1-\sqrt{\eta}}-12n^{1-\delta}\geq n^{1-\sqrt{\eta}}/2\]
        (where the last inequality holds by the hierarchy, say).
        Then, by the pigeonhole principle, there is a cycle $C$ in $\cC_{k-1}$ such that 
        \[|S\cap E(C)|\geq\frac{|S\cap E(\cC_{k-1})|}{k-1}\geq\frac{n^{1-\sqrt{\eta}}}{2n^{1-\delta}}=\frac12n^{\delta-\sqrt{\eta}}\leq n^{\delta/2}\]
        (again by the hierarchy; we need $\eta<\delta^2/4$).}
        Fix a set $X\subseteq S\cap E(C)$ of size $|X|=n^{\delta/2}$ arbitrarily.
        It is clear, again from the induction hypothesis, that
        \begin{equation}\label{equa:contrad_switch}
            \sum_{e\in X}d_{H_{\cC_{k-1}}}(e)\geq|X|(n^{1-\sqrt{\eta}}-12(k-1)) \geq n^{1+\delta/4}.
        \end{equation}
        Now, suppose for a contradiction that, for all cycles $C'$ in $\cC_{k-1}$, we have that $|E(C')|<n^{\delta/2}$ or $\sum_{e\in X}|N_{H_{\cC}}(e)\cap E(C')|<|E(C')|^{1+2\eta}$.
        Then,
        \[
            \sum_{e\in X}d_{H_{\cC_{k-1}}}(e)< n^{\delta/2}\cdot(k-1)+\sum_{C'\in \cC_{k-1}}|E(C')|^{1+2\eta}\leq n^{1-\delta/2}+\left(\sum_{C'\in \cC_{k-1}}|E(C')|\right)^{1+2\eta}\leq n^{1+3\eta}.
        \]
        (To see the second inequality, note that the function $f(x)=x^{1+2\eta}$ is convex on the positive reals, so for any positive $a,b\in \mathbb{R}$ we have that
        \begin{align*}
            a^{1+2\eta}+b^{1+2\eta}&= \left(\frac{a}{a+b} (a+b)+\left(1-\frac{a}{a+b}\right)\cdot0\right)^{1+2\eta}+\left(\frac{b}{a+b} (a+b)+\left(1-\frac{b}{a+b}\right)\cdot0\right)^{1+2\eta}\\
            &\leq \frac{a}{a+b}(a+b)^{1+2\eta}+\left(1-\frac{a}{a+b}\right)0^{1+2\eta}+\frac{b}{a+b}(a+b)^{1+2\eta}+\left(1-\frac{b}{a+b}\right)0^{1+2\eta}\\
            &=(a+b)^{1+2\eta},
        \end{align*}
        where the second line holds by definition of convexity.
        This can be applied iteratively to all the summands above to obtain the desired inequality.)
        However, this contradicts \eqref{equa:contrad_switch}.\COMMENT{Here we need $\eta<\delta/12$ (for the global contradiction) and also that $\eta<\delta^2/16$ for the first inequality (for sufficiently large $n$, assuming that the term $k-1$ is much smaller than $n^{1-\sqrt{\eta}}$, which holds under previous assumptions).}
        Therefore, there must exist a cycle $C'$ in $\cC_{k-1}$ with $|E(C')|\geq n^{\delta/2}$ and such that\COMMENT{Formally, we have that $\sum_{x\in X}|N_{H_{\cC}}(x)\cap E(C')|\geq|V(C')|^{1+2\eta}\geq(|E(C')\cup X|/2)^{1+2\eta}\geq 4|X\cup E(C')|^{1+\eta}$, where the last inequality holds, for sufficiently large $n$, by the hierarchy.} 
        \[\sum_{e\in X}|N_{H_{\cC}}(e)\cap E(C')|\geq|E(C')|^{1+2\eta}\geq 4|X\cup E(C')|^{1+\eta}.\]
        To conclude the proof, set $Y\coloneqq E(C')$.
    \end{claimproof}

    Let $C$ and $C'$ be the two cycles of $\cC_{k-1}$ and $X$ and $Y$ be the sets given by \cref{claim:2cycles}.
    Label the vertices of~$C$ and~$C'$ cyclically, say $C=x_1\dots x_{|V(C)|}$ and $C'=y_1\dots y_{|V(C')|}$, in such a way that if $C=C'$ then $x_i=y_i$ for all $i\in [|V(C)|]$.
    For each $i\in [|V(C)|]$, we denote $e_{C,i}\coloneqq x_ix_{i+1}$ (with indices taken modulo~$|V(C)|$).
    Similarly, for each $i\in [|V(C')|]$, let $e_{C',i}\coloneqq y_iy_{i+1}$.

    Let $H'\subseteq H_{\cC}$ be the red/blue-edge-coloured graph on $X\cup Y$ defined as follows.
    The edge set of~$H'$ consists of all the edges $e_{C,i}e_{C',j}\in E(H_{\cC})$ where $e_{C,i}\in X$ and $e_{C',j}\in Y$, and we colour $e_{C,i}e_{C',j}$ red if both $x_iy_j,x_{i+1}y_{j+1}\in E(G)$, and blue otherwise (and so $x_iy_{j+1},x_{i+1}y_j\in E(G)$ in this case).
    Note that, if~$C\neq C'$, then $H'$ is bipartite with bipartition $(X,Y)$.
    Moreover, \cref{claim:2cycles} implies that $e(H')\geq 2|V(H')|^{1+\eta}$.\COMMENT{Note that the loss of a factor of $2$ is for the case that $C=C'$, in which case we would be double counting the edges when summing like in \cref{claim:2cycles}.}
    We now consider four cases to conclude our analysis.
    The more complex cases (which occur when $C\neq C'$) are illustrated in \cref{fig:three}.

    If $C=C'$ and $H'$ has a blue edge $e_{C,i}e_{C,j}$ for some $1\leq i<j\leq |V(C)|$, then
    \[x_ix_{j+1}x_{j+2}\dots x_i\qquad\text{ and }\qquad x_{i+1}x_{i+2}\dots x_j x_{i+1}\]
    form a $2$-factor $\cC'$ of $G[V(C)]$ with $|E(\cC')\triangle E(C)|= 4$, so we are done.
    
    If $C=C'$ and $H'$ has no blue edge, then \cref{lm:crossing}\COMMENT{Draw the vertices on a circle, in the order in which they appear in $C$.} implies that $H'$ has red edges~$e_{C,h}e_{C,j}$ and~$e_{C,i}e_{C,m}$ for some $1\leq h<i<j<m \leq |V(C)|$, and so 
    \[x_hx_jx_{j-1} \dots x_{i+1}x_{m+1}x_{m+2}\dots x_h\qquad\text{ and }\qquad x_{h+1}x_{j+1}x_{j+2} \dots x_m x_ix_{i-1}\dots x_{h+1}\]
    form a $2$-factor $\cC'$ of $G[V(C)]$ with $|E(\cC')\triangle E(C)|= 8$, so we are done.
    
    If $C\neq C'$ and the red subgraph of $H'$ has at least $|X\cup Y|^{1+\eta}$ edges (see also \cref{fig:red}), then \cref{lm:noncrossing}\COMMENT{Draw the vertices of $X$ and $Y$ on two parallel lines, with straight edges.
    Embed the vertices of $X$ in the order which follows $C$ and the vertices of $Y$ in the order which follows $C'$.} implies that there exist $1\leq i_1<i_2<i_3\leq |V(C)|$ and $1\leq j_1<j_2<j_3\leq |V(C')|$ such that $e_{C,i_1}e_{C',j_1}, e_{C,i_2}e_{C',j_2}$, and $e_{C,i_3}e_{C',j_3}$ are red edges of $H'$.
    Then,
    \[x_{i_1}y_{j_1}y_{j_1-1}\dots y_{j_3+1}x_{i_3+1}x_{i_3+2}\dots x_{i_1}\]
    and
    \[x_{i_1+1}y_{j_1+1}y_{j_1+2}\dots y_{j_2}x_{i_2}x_{i_2-1}\dots x_{i_1+1}\]
    and
    \[x_{i_2+1}y_{j_2+1}y_{j_2+2}\dots y_{j_3}x_{i_3}x_{i_3-1}\dots x_{i_2+1}\]
    form a $2$-factor $\cC'$ of $G[V(C)\cup V(C')]$ with $|E(\cC')\triangle (E(C)\cup E(C'))|= 12$, so we are done.

    \begin{figure}[h]
        \centering
        \begin{subfigure}{0.49\textwidth}
            \centering
        \begin{tikzpicture}[scale=1.5]
    
    \draw[line join=round,line cap=round,line width=.2cm, blue!15] (-2.5,1) -- (-1.75,1);
    \draw[line join=round,line cap=round,line width=.2cm, blue!15] (1.25,1) -- (2,1);
    \draw[line join=round,line cap=round,line width=.2cm, green!85] (0,1) -- (.75,1);
    \draw[line join=round,line cap=round,line width=.2cm, red!95] (-.5,1) -- (-1.25,1);
    \draw[line join=round,line cap=round,line width=.2cm, blue!15] (-2.5,-1) -- (-1.75,-1);
    \draw[line join=round,line cap=round,line width=.2cm, blue!15] (1.25,-1) -- (2,-1);
    \draw[line join=round,line cap=round,line width=.2cm, green!85] (0,-1) -- (.75,-1);
    \draw[line join=round,line cap=round,line width=.2cm, red!95] (-.5,-1) -- (-1.25,-1);

    \node[draw,ellipse, scale=.4, fill] (x1) at (-2.5,1) {};

    \node[draw,ellipse, scale=.4, fill] (xi1) at (-1.75,1) {};

    \node[draw,ellipse, scale=.4, fill] (xi11) at (-1.25,1) {};

    \node[draw,ellipse, scale=.4, fill] (xi2) at (-.5,1) {};

    \node[draw,ellipse, scale=.4, fill] (xi21) at (0,1) {};

    \node[draw,ellipse, scale=.4, fill] (xi3) at (.75,1) {};

    \node[draw,ellipse, scale=.4, fill] (xi31) at (1.25,1) {};

    \node[draw,ellipse, scale=.4, fill] (xC) at (2,1) {};

    \node[draw,ellipse, scale=.4, fill] (y1) at (-2.5,-1) {};

    \node[draw,ellipse, scale=.4, fill] (yj1) at (-1.75,-1) {};
    
    \node[draw,ellipse, scale=.4, fill] (yj11) at (-1.25,-1) {};

    \node[draw,ellipse, scale=.4, fill] (yj2) at (-.5,-1) {};

    \node[draw,ellipse, scale=.4, fill] (yj21) at (0,-1) {};

    \node[draw,ellipse, scale=.4, fill] (yj3) at (.75,-1) {};

    \node[draw,ellipse, scale=.4, fill] (yj31) at (1.25,-1) {};

    \node[draw,ellipse, scale=.4, fill] (yC) at (2,-1) {};

    \draw[line cap=round,line join=round,line width=.2cm, blue!15]  (xi1) to  (yj1);
    \draw[line cap=round,line join=round,line width=.2cm, red!95]  (xi11) to  (yj11);
    \draw[line cap=round,line join=round,line width=.2cm, red!95]  (xi2) to  (yj2);
    \draw[line cap=round,line join=round,line width=.2cm, green!85]  (xi21) to  (yj21);
    \draw[line cap=round,line join=round,line width=.2cm, green!85]  (xi3) to  (yj3);
    \draw[line cap=round,line join=round,line width=.2cm, blue!15]  (xi31) to  (yj31);
    \draw[out=35,in=145,line cap=round,line join=round,line width=.2cm, blue!15]  (x1) to  (xC);
    \draw[out=-35,in=-145, line cap=round,line join=round,line width=.2cm, blue!15]  (y1) to  (yC);

    \filldraw[black] (-2.5,1) circle (0pt) node [anchor=north]{$x_1$};

    \filldraw[black] (-1.75,1) circle (0pt) node[anchor=south]{$x_{i_1}$};

    \filldraw[black] (-1.25,1) circle (0pt) node[anchor=south]{$x_{i_1+1}$};

    \filldraw[black] (-.5,1) circle (0pt) node[anchor=south]{$x_{i_2}$};

    \filldraw[black] (0,1) circle (0pt) node[anchor=south]{$x_{i_2+1}$};

    \filldraw[black] (.75,1) circle (0pt) node[anchor=south]{$x_{i_3}$};

    \filldraw[black] (1.25,1) circle (0pt) node[anchor=south]{$x_{i_3+1}$};

    \filldraw[black] (2,1) circle (0pt) node [anchor=north]{$x_{|V(C)|}$};

    \filldraw[black] (-2.5,-1) circle (0pt) node [anchor=south]{$y_1$};

    \filldraw[black] (-1.75,-1) circle (0pt) node[anchor=north]{$y_{j_1}$};

    \filldraw[black] (-1.25,-1) circle (0pt) node[anchor=north]{$y_{j_1+1}$};

    \filldraw[black] (-.5,-1) circle (0pt) node[anchor=north]{$y_{j_2}$};

    \filldraw[black] (0,-1) circle (0pt) node[anchor=north]{$y_{j_2+1}$};

    \filldraw[black] (.75,-1) circle (0pt) node[anchor=north]{$y_{j_3}$};

    \filldraw[black] (1.25,-1) circle (0pt) node[anchor=north]{$y_{j_3+1}$};

    \filldraw[black] (2,-1) circle (0pt) node [anchor=south]{$y_{|V(C')|}$};

    \node[draw,ellipse, scale=.4, fill] (x1) at (-2.5,1) {};

    \node[draw,ellipse, scale=.4, fill] (xi1) at (-1.75,1) {};

    \node[draw,ellipse, scale=.4, fill] (xi11) at (-1.25,1) {};

    \node[draw,ellipse, scale=.4, fill] (xi2) at (-.5,1) {};

    \node[draw,ellipse, scale=.4, fill] (xi21) at (0,1) {};

    \node[draw,ellipse, scale=.4, fill] (xi3) at (.75,1) {};

    \node[draw,ellipse, scale=.4, fill] (xi31) at (1.25,1) {};

    \node[draw,ellipse, scale=.4, fill] (xC) at (2,1) {};

    \node[draw,ellipse, scale=.4, fill] (y1) at (-2.5,-1) {};

    \node[draw,ellipse, scale=.4, fill] (yj1) at (-1.75,-1) {};
    
    \node[draw,ellipse, scale=.4, fill] (yj11) at (-1.25,-1) {};

    \node[draw,ellipse, scale=.4, fill] (yj2) at (-.5,-1) {};

    \node[draw,ellipse, scale=.4, fill] (yj21) at (0,-1) {};

    \node[draw,ellipse, scale=.4, fill] (yj3) at (.75,-1) {};

    \node[draw,ellipse, scale=.4, fill] (yj31) at (1.25,-1) {};

    \node[draw,ellipse, scale=.4, fill] (yC) at (2,-1) {};

    \draw[dashed, thick]  (xi1) to  (yj1);
    
    \draw[dashed, thick]  (xi11) to  (yj11);
    
    \draw[dashed, thick]  (xi2) to  (yj2);
    
    \draw[dashed, thick]  (xi21) to  (yj21);
    
    \draw[dashed, thick]  (xi3) to  (yj3);
    
    \draw[dashed, thick]  (xi31) to  (yj31);

    \draw[out=35,in=145, thick]  (x1) to  (xC);
    
    \draw[out=-35,in=-145, thick]  (y1) to  (yC);

    \draw[thick] (-2.5,1) -- (2,1);
    \draw[thick] (-2.5,-1) -- (2,-1);

\end{tikzpicture}
\caption{Case $C\neq C'$ and $H'$ has many red edges.}
\label{fig:red}
\end{subfigure}
\hfill
\begin{subfigure}{0.49\textwidth}
            \centering
        \begin{tikzpicture}[scale=1.5]
    
    \draw[line join=round,line cap=round,line width=.2cm, blue!15] (-2.5,1) -- (-1.75,1);
    \draw[line join=round,line cap=round,line width=.2cm, blue!15] (1.25,1) -- (2,1);
    \draw[line join=round,line cap=round,line width=.2cm, green!85] (0,1) -- (.75,1);
    \draw[line join=round,line cap=round,line width=.2cm, red!95] (-.5,1) -- (-1.25,1);
    \draw[line join=round,line cap=round,line width=.2cm, blue!15] (-2.5,-1) -- (-1.75,-1);
    \draw[line join=round,line cap=round,line width=.2cm, blue!15] (1.25,-1) -- (2,-1);
    \draw[line join=round,line cap=round,line width=.2cm, red!95] (0,-1) -- (.75,-1);
    \draw[line join=round,line cap=round,line width=.2cm, green!85] (-.5,-1) -- (-1.25,-1);

    \node[draw,ellipse, scale=.4, fill] (x1) at (-2.5,1) {};
    \node[draw,ellipse, scale=.4, fill] (xi1) at (-1.75,1) {};
    \node[draw,ellipse, scale=.4, fill] (xi11) at (-1.25,1) {};
    \node[draw,ellipse, scale=.4, fill] (xi2) at (-.5,1) {};
    \node[draw,ellipse, scale=.4, fill] (xi21) at (0,1) {};
    \node[draw,ellipse, scale=.4, fill] (xi3) at (.75,1) {};
    \node[draw,ellipse, scale=.4, fill] (xi31) at (1.25,1) {};
    \node[draw,ellipse, scale=.4, fill] (xC) at (2,1) {};

    \node[draw,ellipse, scale=.4, fill] (y1) at (-2.5,-1) {};
    \node[draw,ellipse, scale=.4, fill] (yj1) at (-1.75,-1) {};
    \node[draw,ellipse, scale=.4, fill] (yj11) at (-1.25,-1) {};
    \node[draw,ellipse, scale=.4, fill] (yj2) at (-.5,-1) {};
    \node[draw,ellipse, scale=.4, fill] (yj21) at (0,-1) {};
    \node[draw,ellipse, scale=.4, fill] (yj3) at (.75,-1) {};
    \node[draw,ellipse, scale=.4, fill] (yj31) at (1.25,-1) {};
    \node[draw,ellipse, scale=.4, fill] (yC) at (2,-1) {};

    \draw[line cap=round,line join=round,line width=.2cm, blue!15]  (xi1) to  (yj31);
    \draw[line cap=round,line join=round,line width=.2cm, red!95]  (xi11) to  (yj3);
    \draw[line cap=round,line join=round,line width=.2cm, red!95]  (xi2) to  (yj21);
    \draw[line cap=round,line join=round,line width=.2cm, green!85]  (xi21) to  (yj2);
    \draw[line cap=round,line join=round,line width=.2cm, green!85]  (xi3) to  (yj11);
    \draw[line cap=round,line join=round,line width=.2cm, blue!15]  (xi31) to  (yj1);
    \draw[out=35,in=145,line cap=round,line join=round,line width=.2cm, blue!15]  (x1) to  (xC);
    \draw[out=-35,in=-145, line cap=round,line join=round,line width=.2cm, blue!15]  (y1) to  (yC);

    \filldraw[black] (-2.5,1) circle (0pt) node [anchor=north]{$x_1$};
    \filldraw[black] (-1.75,1) circle (0pt) node[anchor=south]{$x_{i_1}$};
    \filldraw[black] (-1.25,1) circle (0pt) node[anchor=south]{$x_{i_1+1}$};
    \filldraw[black] (-.5,1) circle (0pt) node[anchor=south]{$x_{i_2}$};
    \filldraw[black] (0,1) circle (0pt) node[anchor=south]{$x_{i_2+1}$};
    \filldraw[black] (.75,1) circle (0pt) node[anchor=south]{$x_{i_3}$};
    \filldraw[black] (1.25,1) circle (0pt) node[anchor=south]{$x_{i_3+1}$};
    \filldraw[black] (2,1) circle (0pt) node [anchor=north]{$x_{|V(C)|}$};

    \filldraw[black] (-2.5,-1) circle (0pt) node [anchor=south]{$y_1$};
    \filldraw[black] (-1.75,-1) circle (0pt) node[anchor=north]{$y_{j_1}$};
    \filldraw[black] (-1.25,-1) circle (0pt) node[anchor=north]{$y_{j_1+1}$};
    \filldraw[black] (-.5,-1) circle (0pt) node[anchor=north]{$y_{j_2}$};
    \filldraw[black] (0,-1) circle (0pt) node[anchor=north]{$y_{j_2+1}$};
    \filldraw[black] (.75,-1) circle (0pt) node[anchor=north]{$y_{j_3}$};
    \filldraw[black] (1.25,-1) circle (0pt) node[anchor=north]{$y_{j_3+1}$};
    \filldraw[black] (2,-1) circle (0pt) node [anchor=south]{$y_{|V(C')|}$};

    \node[draw,ellipse, scale=.4, fill] (x1) at (-2.5,1) {};
    \node[draw,ellipse, scale=.4, fill] (xi1) at (-1.75,1) {};
    \node[draw,ellipse, scale=.4, fill] (xi11) at (-1.25,1) {};
    \node[draw,ellipse, scale=.4, fill] (xi2) at (-.5,1) {};
    \node[draw,ellipse, scale=.4, fill] (xi21) at (0,1) {};
    \node[draw,ellipse, scale=.4, fill] (xi3) at (.75,1) {};
    \node[draw,ellipse, scale=.4, fill] (xi31) at (1.25,1) {};
    \node[draw,ellipse, scale=.4, fill] (xC) at (2,1) {};

    \node[draw,ellipse, scale=.4, fill] (y1) at (-2.5,-1) {};
    \node[draw,ellipse, scale=.4, fill] (yj1) at (-1.75,-1) {};
    \node[draw,ellipse, scale=.4, fill] (yj11) at (-1.25,-1) {};
    \node[draw,ellipse, scale=.4, fill] (yj2) at (-.5,-1) {};
    \node[draw,ellipse, scale=.4, fill] (yj21) at (0,-1) {};
    \node[draw,ellipse, scale=.4, fill] (yj3) at (.75,-1) {};
    \node[draw,ellipse, scale=.4, fill] (yj31) at (1.25,-1) {};
    \node[draw,ellipse, scale=.4, fill] (yC) at (2,-1) {};

    \draw[dashed, thick]  (xi1) to  (yj31);
    \draw[dashed, thick]  (xi11) to  (yj3);
    \draw[dashed, thick]  (xi2) to  (yj21);
    \draw[dashed, thick]  (xi21) to  (yj2);
    \draw[dashed, thick]  (xi3) to  (yj11);
    \draw[dashed, thick]  (xi31) to  (yj1);

    \draw[out=35,in=145, thick]  (x1) to  (xC);
    \draw[out=-35,in=-145, thick]  (y1) to  (yC);

    \draw[thick] (-2.5,1) -- (2,1);
    \draw[thick] (-2.5,-1) -- (2,-1);
\end{tikzpicture}
\caption{Case $C\neq C'$ and $H'$ has many blue edges.}
\label{fig:blue}
\end{subfigure}
        \caption{The cycles $C$ and $C'$ (full black) with three implanted $C_4$'s (dashed black) corresponding to three disconnected red edges of $H'$ (left) or three crossing blue edges of $H'$ (right). Performing these $C_4$-switches gives a $2$-factor of $G[V(C)\cup V(C')]$ (highlighted) consisting of three cycles.}
        \label{fig:three}
    \end{figure}

    Lastly, if $C\neq C'$ and the blue subgraph of $H'$ has at least $|X\cup Y|^{1+\eta}$ edges (see also \cref{fig:blue}), then \cref{lm:crossing}\COMMENT{Draw the vertices of $X$ and $Y$ on two parallel lines, with straight edges.
    Embed the vertices of $X$ in the order which follows $C$ and the vertices of $Y$ in the order which follows $C'$.} implies that there exist $1\leq i_1<i_2<i_3\leq |V(C)|$ and $1\leq j_1<j_2<j_3\leq |V(C')|$ such that $e_{C,i_1}e_{C',j_3}, e_{C,i_2}e_{C',j_2}$, and $e_{C,i_3}e_{C',j_1}$ are blue edges of $H'$.
    Then,
    \[x_{i_1}y_{j_3+1}y_{j_3+2}\dots y_{j_1}x_{i_3+1}x_{i_3+2}\dots x_{i_1}\]
    and
    \[x_{i_1+1}y_{j_3}y_{j_3-1}\dots y_{j_2+1}x_{i_2}x_{i_2-1}\dots x_{i_1+1}\]
    and
    \[x_{i_2+1}y_{j_2}y_{j_2-1}\dots y_{j_1+1}x_{i_3}x_{i_3-1}\dots x_{i_2+1}\]
    form a $2$-factor $\cC'$ of $G[V(C)\cup V(C')]$ with $|E(\cC')\triangle (E(C)\cup E(C'))|= 12$, so we are done.
\end{proof}

\subsection{Deriving Theorem \ref{thm:main}}\label{sec:main_proof}

\begin{proof}[Proof of \cref{thm:main}]
    Let $0< 1/n \ll \eps \ll \eta \ll \delta \leq 1$.
    Let $G$ be an $n$-vertex graph with $\delta(G)\geq n^{1-\eps}$ and let $k\in [n^{1-\delta}]$.
    Suppose that~$G$ contains a $2$-factor~$\cC_0$ consisting of $\ell\in[k]$ cycles $C_1, \dots, C_\ell$.
    Fix an (arbitrary) set $E_-\coloneqq\{x_{i+1}y_{i+1}\in E(C_{i+1}), z_iw_i\in E(C_i): i\in [\ell-1]\}$ of distinct (not necessarily disjoint) edges and let $E_+\coloneqq\{z_ix_{i+1},w_iy_{i+1}: i\in [\ell-1]\}$ and $V_E\coloneqq\{x_{i+1},y_{i+1},z_i,w_i: i\in [\ell-1]\}$.
    Then, $\cC\coloneqq \cC_0\setminus E_- \cup E_+$ is a Hamilton cycle of $G'\coloneqq G\cup E_+$.

    Denote by $E_0$ the set of edges of $\cC$ which are incident to some vertex of $V_E$ and observe that $|E_0|\leq 8(\ell-1)\leq n^{1-\eta}$.
    By \cref{cor:stage}, $G'$ contains a Hamilton cycle $\cC'$ such that $E_0\subseteq \cC'$ and $e(H_{\cC'})\geq n^{2-\eta}$.
    Then, $\cC''\coloneqq \cC'\setminus E_+\cup E_-$ is a $2$-factor of $G$ consisting of at most $\ell$ cycles.\COMMENT{To see this, observe that $\cC''$ is obtained from $\cC'$ by performing $\ell-1$ $C_4$-switches.}
    Moreover, $e(H_{\cC''})\geq n^{2-\eta}-2(\ell-1) n\geq n^{2-2\eta}$.
    Applying \cref{lm:switch} with $\cC''$ and $2\eta$ playing the roles of $\cC$ and~$\eta$ gives a $2$-factor of $G$ consisting of exactly $k$ cycles, as desired.
\end{proof}

\section{Concluding remarks}

\Cref{thm:main} provides a sufficient minimum-degree condition for a Hamiltonian graph 
to contain a \mbox{$2$-factor} consisting of precisely $k$ cycles.
The required minimum degree is a polynomial factor away from linear; this makes substantial progress towards determining the correct order of magnitude of the optimal sufficient minimum-degree condition, a problem raised in all previous papers on this topic~\cite{FGJLS05,Sar08,DFM14,BJPS20}.

With this question in mind, it is natural to wonder how close our minimum-degree condition is to being best possible.
An example of \citet[Proposition~4.1]{BJPS20} shows that, for all integers $k\geq2$ and $n\geq2k$, there are $n$-vertex Hamiltonian graphs of minimum degree $k+1$ which do not contain a $2$-factor consisting of exactly~$k$ cycles.
To our knowledge, this provides the best known lower bound on what the optimal minimum-degree condition should be for all $k\geq3$ (for $k=2$, \citet{FGJLS05} provided an example with minimum degree $4$).
In view of this construction, if~$\delta$ is bounded away from~$1$, polynomially large degrees are necessary to ensure the existence of a $2$-factor with~$n^{1-\delta}$ cycles in a Hamiltonian graph, and hence our result is best possible up to the value of~$\eps$, which we made no effort to optimise (one can easily verify from our proofs that the dependence of~$\eps$ on~$\delta$ is polynomial\COMMENT{$\eps<\delta^6/11520$ should suffice}).
It would thus be interesting to determine the best possible value of~$\eps$, as a function of~$\delta$, in \cref{thm:main}.

While our result is best possible up to the value of $\eps$ for any fixed $\delta<1$, it is unclear whether this is the case when $\delta=1-o(1)$.
As already pointed out by other authors~\cite{FGJLS05,DFM14,BJPS20}, it is conceivable that constant degrees suffice to ensure the existence of a $2$-factor consisting of a fixed number of cycles.
Some evidence for this was given by \citet{Pf04}, who showed that every claw-free Hamiltonian graph of maximum degree at least $7$ has a $2$-factor consisting of exactly two cycles.
In general, we have no reason to believe that the polynomial lower bound on the degrees in \cref{thm:main} is optimal for sub-polynomial~$k$.
Indeed, our bound on the minimum degree is an artifact of our proof, which, for one, relies heavily on ensuring that~$G$ contains many cycles of length~$4$ (an additional constraint is given by the use of \cref{lem:thomassen_new}).
One possible avenue for improving our results is to replace the use of \mbox{$C_4$-switches} by \mbox{$C_\ell$-switches}, for larger even~$\ell$, as the density required for a graph to contain many copies of such cycles decreases as $\ell$ grows.
This however increases the complexity of the arguments, so we would be interested in seeing a suitable generalisation of our approach.

The minimum-degree condition for a graph $G$ to contain a $2$-factor consisting of exactly $k$ cycles provided in \cref{thm:main} is sufficient not only when $G$ is Hamiltonian, but also under the weaker assumption that $G$ contains a $2$-factor with at most $k$ components.
Essentially, this says that having a $2$-factor with any number of cycles makes it ``much easier'' to have a $2$-factor with more cycles (from the perspective of a minimum-degree requirement).
It may be natural to consider the opposite direction: if we know that $G$ contains a $2$-factor with~$k$ cycles, what minimum degree ensures that it also contains a $2$-factor with exactly $\ell<k$ cycles?
The behaviour of this problem is completely different from the previous one. 
Indeed, consider a graph $G$ on $n\coloneqq 3(k-1)+2m$ vertices which consists of $k-1$ vertex-disjoint triangles, a copy of a complete bipartite graph $K_{m,m}$ with independent sets $A$ and $B$ which is vertex-disjoint from all the triangles, and an additional edge from every vertex in a triangle to each vertex in~$A$.
It is clear that $\delta(G)=m$ and that it contains a $2$-factor consisting of precisely $k$ cycles.\COMMENT{This is given by the $k-1$ triangles and a Hamilton cycle of the balanced complete bipartite graph.}
However, this graph cannot contain any $2$-factor with fewer cycles, for each such $2$-factor would need to have at least one cycle which, between two vertices of~$A$, only has some vertices contained in a triangle.
Such a cycle immediately causes an imbalance between~$A$ and~$B$ which cannot be rebalanced, since each vertex of~$B$ is only joined to~$A$.
The conclusion is that, for any $k$ (even allowed to grow with $n$, as long as $k=o(n)$) and any $\eps>0$, if $n$ is sufficiently large, there exist $n$-vertex graphs $G$ with $\delta(G)\geq(1/2-\eps)n$ which contain a $2$-factor consisting of exactly $k$ cycles yet they do not contain any $2$-factor with fewer than~$k$ components.

A possible avenue for further research could aim to strengthen the conclusion of our main result.
In this direction, recall that \citet{BCFGL97} showed that $\delta(G)\geq n/2$ suffices for~$G$ to contain a $2$-factor consisting of precisely $k$~cycles (for sufficiently large~$n$).
A way to strengthen the conclusion of their result is to ask not only that the $2$-factor consists of~$k$ cycles, but to prescribe their lengths.
In this direction, a well-known conjecture of \citet{ElZ84} affirms that, in order for a graph~$G$ on $n=\sum_{i=1}^kn_i$ vertices to contain a $2$-factor with cycles of lengths $n_1,\ldots,n_k$, it suffices that $\delta(G)\geq\sum_{i=1}^k\lceil n_i/2\rceil$, and this condition would be best possible.
This conjecture is known to hold for several particular cases.
As examples, it is known to hold in all cases when $k=2$~\cite{ElZ84}; 
the well-known Corr\'adi-Hajnal theorem~\cite{CH63} corresponds to the case that $n=3k$ and $n_i=3$ for all $i\in[k]$, and the case that $n=4k$ and all cycles have length $4$ was settled by \citet{Wang10}.\COMMENT{(see~\cite{CY18} for a survey of such results)}
In a similar direction as in El-Zahar's conjecture, one could ask to prescribe the lengths of the cycles in \cref{thm:main}.
Thus, we propose the following general problem.

\begin{problem}
    Let $n_1,\ldots,n_k\geq3$ be positive integers, and $n\coloneqq\sum_{i=1}^kn_i$.
    What is the optimal condition on the minimum degree of an $n$-vertex Hamiltonian graph under which we can we guarantee that it contains a $2$-factor consisting of cycles of lengths $n_1,\ldots,n_k$?
\end{problem}

As a more particular question, given the sublinear minimum-degree condition in \cref{thm:main}, one could be interested in understanding for which families of values of $n_1,\ldots,n_k$ (if any) the optimal minimum degree condition grows sublinearly with~$n$.
This is certainly not the case in general.
One immediately notices that $\delta(G)>n/2$ becomes a necessary condition if any of the desired lengths are odd (indeed, a balanced complete bipartite graph is Hamiltonian and does not contain any odd cycles), but there are also many sets of even lengths for which the required minimum degree remains linear, and in some cases one cannot even improve on the minimum-degree condition predicted by \citet{ElZ84}. 
\COMMENT{We show this for some values of $n$, assuming certain divisibility conditions, but the examples we propose can be generalised in different ways.}

For a first example of this last behaviour, let $k\geq3$ be an integer and consider a graph on $n=4k+2$ vertices consisting of two vertex-disjoint cliques of order $2k+1$ which are joined by a matching of size~$2$.
It is clear that this graph is Hamiltonian and has minimum degree $2k=n/2-1$.
However, it does not contain a $2$-factor consisting of two cycles of lengths $2k+2$ and~$2k$ (indeed, any cycle which is not entirely contained in one of the two cliques must contain at least two vertices in the other).
Thus, for these choices of $n$ and lengths, the Hamiltonicity assumption does not allow us to improve the minimum-degree condition of \citet{ElZ84}.
A slight modification of this example when $n=4k$ is not divisible by~$8$ and both cliques have size~$2k$ shows that, for the case that all cycles have length~$4$, the Hamiltonicity assumption does not allow us to improve on the minimum-degree condition proved by \citet{Wang10}.\COMMENT{Indeed, let $G$ be a graph on $n=4k$ vertices, where $k$ is odd, which we obtain as follows.
First, consider the vertex-disjoint union of two cliques of order $2k$.
Fix four distinct vertices $u,v$ in one of the cliques and $x,y$ in the other, and add the edges $ux$ and $vy$ to the graph, and remove the edge $uv$.
Since $k$ is odd, neither of the two cliques can contain a $C_4$-factor, so in order to build one at least one cycle must live ``across'' both cliques.
But such a cycle would need to be $uxyv$, which is not contained in $G$.}

A simple modification of the example above provides an almost-tight condition for the case that we want an even number of cycles, all of the same even length $\ell$.\COMMENT{For the particular case that $\ell=4$, this together with the previous example covers all possible values of $n$.}
Indeed, for any given positive integers $h$ and $\ell\geq 3$, let $n\coloneqq2h\ell$ and consider a graph which consists of the vertex-disjoint union of two cliques of sizes $h\ell-1$ and $h\ell+1$, plus a matching of size $2$ joining them.
We note that this graph is Hamiltonian and has minimum degree $n/2-2$ but does not contain a $2$-factor consisting of~$2h$ cycles all of length~$\ell$.
Indeed, if we consider the smaller clique, it can contain at most $h-1$ cycles of length~$\ell$, and there is at most one cycle which contains vertices from both cliques.
However, such a cycle cannot have length~$\ell$ if it contains $\ell-1$ vertices in the smaller clique, which would be needed so that all vertices in the clique are contained in some cycle.
Thus, the sufficient minimum degree for a Hamiltonian graph on $n$ vertices to contain a $2$-factor with all cycles of length~$\ell$ must be at least $n/2-1$.
This example can easily be modified to deal with other families of (not all equal) lengths to obtain other almost tight lower bounds.


As a further observation, we note that, for any collection of even lengths $n_1\leq\ldots\leq n_k$, in order for a graph $G$ on $n\coloneqq\sum_{i=1}^kn_i$ vertices to contain a $2$-factor with cycles of such lengths, a sufficient minimum-degree condition must satisfy that $\delta(G)=\Omega(\max\{n_1,n/n_k\})$.
Indeed, we first give a simple example with $\delta(G)=\Theta(n_1)$ which does not contain the desired $2$-factor.
Let $G$ be a graph whose vertex set is equipartitioned into $\lceil n/(n_1-1)\rceil$ sets (so they all have size at most $n_1-1$), where each part forms a clique and the parts are joined by disjoint edges in a cyclic manner.
It is clear that~$G$ is Hamiltonian and that $\delta(G)=\Theta(n_1)$.
Clearly, none of the cliques may contain a cycle of length $n_1$, so a cycle of length $n_1$ must ``go around'' the different cliques.
However, at most one cycle can go around, so each other cycle would need to be contained in one of the cliques, which are too small to contain them.
The second example, which shows that $\delta(G)\geq n/(n_k/2+1)-2$ does not suffice, is fairly similar.
Consider a graph $G$ which consists of $N\coloneqq n_k/2+1$ cliques, all of size roughly $n/N$ but slightly unbalanced so that at least one of the cliques has odd order,\COMMENT{Note that this is only needed if $n/N$ is an even integer, otherwise any equipartition works.} joined by disjoint edges in a cyclic manner.
It is clear that $\delta(G)\geq n/N-2$ and that it is Hamiltonian.
Any clique of odd order may not be decomposed into cycles of even lengths, so at least one cycle would need to ``go around'' all the cliques.
But any such cycle must have length at least $2N>n_k$, a contradiction.
For most families of lengths, these lower bounds provide a clear separation between this problem and the one tackled in \cref{thm:main}, where we do not have any such lower bounds for the optimal minimum-degree condition.

The examples discussed above rely on divisibility constraints.
One can remove these constraints by not prescribing all the lengths of the cycles, and instead prescribing only some of them, leaving some flexibility for how the $2$-factor is completed.
In this direction, we propose the following problem.

\begin{problem}
    Is it true that for every $\eps>0$ there exists some $\delta>0$ such that the following holds?\\
    \indent    
    Let $n,n_1,\ldots,n_k$ be positive even integers such that $n_i\leq\delta n$ for all $i\in[k]$ and $\sum_{i=1}^kn_i\leq(1-\eps)n$.
    If~$G$ is an \mbox{$n$-vertex} Hamiltonian graph with $\delta(G)\geq\eps n$, then $G$ contains a $2$-factor which contains cycles of lengths $n_1,\ldots,n_k$.
\end{problem}

\bibliographystyle{mystyle} 
\bibliography{refs}

\end{document}